\documentclass[sn-mathphys]{sn-jnl}

\usepackage{amsmath,amssymb,amsfonts,amsthm}
\usepackage{graphicx}
\usepackage{hyperref}
\usepackage{mathtools}
\usepackage[numbers,square]{natbib}
\theoremstyle{thmstyleone}%
\newtheorem{theorem}{Theorem}
\newtheorem{proposition}[theorem]{Proposition}%
\newtheorem{lemma}[theorem]{Lemma}%
\newtheorem{observation}[theorem]{Observation}%
\newtheorem{corollary}[theorem]{Corollary}%
\theoremstyle{thmstyletwo}%
\newtheorem{problem}{Problem}
\theoremstyle{thmstylethree}%
\newtheorem{definition}{Definition}%

\usepackage{tikz}
\usetikzlibrary{arrows}
\usetikzlibrary{positioning,arrows.meta}

\newcommand{\AX}[1]{{\upshape{(#1)}}}
\newcommand{\smallqed}{{\tiny ($\Box$)}}

\begin{document}
\author[1,2]{\fnm{Bo{\v{s}}tjan} \sur{Bre{\v{s}}ar}}
\email{bostjan.bresar@um.si}
\author[3,4]{\fnm{Manoj} \sur{Changat}}
\email{mchangat@keralauniversity.ac.in}
\author[5]{\fnm{Prasanth G.} \sur{Narasimha-Shenoi}}
\email{prasanthgns@gmail.com}
\author[6,7]{\fnm{Bruno J.} \sur{Schmidt}}
\email{bruno@bioinf.uni-leipzig.de}
\author[6,7,8,9,10]{\fnm{Peter F.} \sur{Stadler}}
\email{studla@bioinf.uni-leipzig.de}

\affil[1]{\orgdiv{Department of Mathematics and Computer Science,
    Faculty of Natural Sciences and Mathematics},
  \orgname{University of Maribor}, 
  \orgaddress{\street{Koro{\v{s}}ka c.\ 160},
  \postcode{2000} \city{Maribor}, \country{Slovenia}}}

\affil[2]{\orgname{Institute of Mathematics, Physics and Mechanics}, 
    \orgaddress{\city{Ljubljana}, \country{Slovenia}}} 

\affil[3]{\orgdiv{Department of Futures Studies}, \orgname{University of
    Kerala}, \orgaddress{\street{Karyavattom}\,
    \postcode{695 581} \city{Thiruvananthapuram}, 
    \country{India}}}
\affil[4]{\orgdiv{Centre for Socio-economic and Environmental Studies (CSES)}, \orgname{Khadi Federation Building}, \orgaddress{\street{NH By-pass}\,
    \postcode{682 024} \city{Ernakulam}, 
    \country{India}}}
\affil[5]{\orgdiv{Department of Matheamtics},
  \orgname{Goverment Collge, Chittur}, \orgaddress{\street{}
    \postcode{678 104} \city{Palakkad}, 
    \country{India}}}
\affil[6]{\orgname{Max Planck Institute for Mathematics in the Sciences},
  \orgaddress{\street{Inselstra{\ss}e 22},
    \postcode{D-04103} \city{Leipzig}, \country{Germany}}}

\affil[7]{\orgdiv{Bioinformatics Group, Department of Computer Science \&
    Interdisciplinary Center for Bioinformatics}, \orgname{Leipzig
    University}, \orgaddress{\street{H{\"a}rtelstra{\ss}e 16–18},
    \postcode{D-04107} \city{Leipzig}, \country{Germany}}}

\affil[8]{\orgdiv{Department of Theoretical Chemistry}, \orgname{University
    of Vienna}, \orgaddress{\street{W{\"a}hringerstra{\ss}e 17},
    \postcode{A-1090} \city{Wien}, \country{Austria}}}

\affil[9]{\orgdiv{Facultad de Ciencias}, \orgname{Universidad National de
    Colombia}, \orgaddress{\city{Bogot{\'a}}, \country{Colombia}}}

\affil[10]{\orgname{Santa Fe Institute}, \orgaddress{\street{1399 Hyde Park
      Rd.}, \city{Santa Fe}, \state{NM} \postcode{87501}, \country{USA}}}

\title{Smooth Graphs}

\keywords{signpost system, metric graph theory, Cartesian product, strong
  product, gated amalgam, isometric subgraph, median graph, weakly modular
  graph, weakly median graph, Ptolemaic graph, point-shadow, convexity}

\abstract{The notion of smoothness was introduced originally in the context
  of step systems on connected graphs. Smoothness turns out to be a very
  general property of metrics defined by a five-point condition. Restricted
  to graphs, it is closely related to the convexity of point-shadows. We show
  that smoothness is preserved by isometric subgraphs, both Cartesian and
  strong graph products, and gated amalgams. As a consequence, median
  graphs and many of their generalizations are smooth. We also show that
  $\ell_1$-graphs are smooth. On the other hand, an induced $K_{2,3}$ or
  $K_{1,1,3}$ is incompatible with smoothness. Finally, we characterize
  smooth graphs among the Ptolemaic graphs as precisely the
  $K_{1,1,3}$-free Ptolemaic graphs.}

\maketitle

\section{Introduction}

Many interesting properties of a connected graph $G=(V,E)$ are closely related to
the metric structure on its vertex set $V$ that is determined by the
shortest path distance $d:V^2\to\mathbb{N}_0$, where $d(u,v)$ is the
minimum number of edges along any $u,v$-path. Several well-studied
relations and set functions derive from $d$. The \emph{neighborhood} of a
vertex can be specified as $N(u)=\{w\in V:\, d(u,w)=1\}$.  The
intervals in $G$ are defined as sets.
\begin{equation}
  I[u,v]\coloneqq \{w\in V:\, d(u,w)+d(w,v)=d(u,v)\}.
\end{equation}
A subset $S$ of $V$ is \emph{geodesically convex} or
$g$ \emph{-convex} if $I[u,v]\subseteq S$ for all $u,v\in S$.  For a subset
$W$ of $V$, the smallest g-convex set containing $W$ is called
the \emph{geodesically convex hull} of $W$, denoted as $\langle W \rangle
$.

The family of functions $I:V^2\to 2^V$ that are the interval functions of
undirected connected graphs can be characterized by a fairly simple set of
first-order axioms \cite{nebe-94,Mulder:09}. A closely related set function
is the \emph{step function} $S:V^2\to 2^V$ of a graph \cite{Ne2} defined by
\begin{equation}
  S(u,v) = I[u,v]\cap N(u). 
\end{equation}
Similar to the interval functions of graphs, their step functions also have
a concise axiomatic characterization \cite{Ne2}. It builds upon the more
general notion of \emph{signpost systems}, which were introduced as ternary
relations that encode path information in a strictly local manner, without
presupposing an underlying graph. Throughout, we write $x\in S(u,v)$
instead of $(u,x,v)$ for a ``signpost'' at vertex $u$ pointing towards $x$
to the first step on a path towards $v$ \cite{Muld1}.  In their most
general form, the signpost system satisfies three natural axioms for all
$u,v,x\in V$:
\begin{itemize}
\item[\AX{A}] If $v\in S(u,x)$ then $u\in S(v,u)$. 
\item[\AX{B}] If $v\in S(u,x)$ then $u\notin S(v,x)$.
\item[\AX{H}] If $u \neq v$ then there exists a $z\in V$ such that
              $z\in S(u,v)$.
\end{itemize}
Each signpost system $S:V^2\to 2^V$ is naturally associated with the
\emph{underlying graph} $G$ with $uv$ being an edge if and only if $v\in S(u,v)$.
The step systems deriving from connected undirected graphs have
characterizations as signpost systems that satisfy several additional first
order axioms, see \cite{Ne2,Nebesky:00,Muld1,Changat:26a} for slightly
different characterizations.  Signpost systems $S$ that satisfy these
properties are called \emph{step systems} and coincide with the step system
of the underlying graph. Thus, step systems are an equivalent description of
connected undirected graphs.
 
In \cite{nebesky2005signpost}, Nebesk{\'y} introduced a ``smoothness
axiom'' for signpost systems as follows: 
\begin{enumerate}
\item[\AX{Sm}] If $v\in S(u,w)$, $v\in S(u,y)$, and $x\in S(w,y)$ then
  $v\in S(u,x)$. 
\end{enumerate}
We remark that the smoothness axiom is stated in a different but equivalent
form in \cite{nebesky2004properties}. Due to the 1-1 correspondence
between step systems and connected undirected graphs, graph classes are
defined by additional properties of signpost systems. Here, we focus on
\AX{Sm}:
\begin{definition}
  A connected graph $G$ is \textit{smooth} if its step system satisfies
  \AX{Sm}.
\end{definition}
Using the connection between signpost triples and the shortest paths in
the underlying graphs, it is straightforward to translate \AX{Sm} into
graph-theoretical terms:
\begin{observation}
  \label{obs:interval}
  A connected graph $G$ is smooth if and only if any five
  vertices $u,v,w,x,y$ satisfy the following condition:
\begin{itemize}
\item[] If $uv,wx\in E(G)$, $v\in I[u,w]\cap I[u,y]$, and $x\in I[w,y]$, then
  $v\in I[u,x]$.
\end{itemize}
\end{observation}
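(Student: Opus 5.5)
The plan is to translate \AX{Sm} term by term via the step function $S(u,v)=I[u,v]\cap N(u)$. The key fact is that, for the step system of a connected graph, $a\in S(b,c)$ holds if and only if $ba\in E(G)$ and $a\in I[b,c]$. This is immediate from the definition: $a\in N(b)$ means $d(b,a)=1$, i.e.\ $ab\in E(G)$.

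With this, I rewrite the three hypotheses of \AX{Sm}:
\begin{itemize}
\item $v\in S(u,w)$ becomes $uv\in E(G)$ and $v\in I[u,w]$;
\item $v\in S(u,y)$ becomes $uv\in E(G)$ and $v\in I[u,y]$;
\item $x\in S(w,y)$ becomes $wx\in E(G)$ and $x\in I[w,y]$.
\end{itemize}
Combining these gives exactly the hypothesis of the condition in Observation~\ref{obs:interval}. The first two share the edge $uv$, so together they read $uv\in E(G)$ and $v\in I[u,w]\cap I[u,y]$. The conclusion $v\in S(u,x)$ becomes $uv\in E(G)$ and $v\in I[u,x]$. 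Since $uv\in E(G)$ is already a hypothesis, this is equivalent to $v\in I[u,x]$.

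Both directions then follow for each fixed 5-tuple $(u,v,w,x,y)$. Suppose $G$ is smooth and the graph-theoretic hypotheses hold. The three signpost memberships hold, so \AX{Sm} gives $v\in S(u,x)$, and hence $v\in I[u,x]$. Conversely, suppose the graph condition holds and the premises of \AX{Sm} are satisfied. The translated hypotheses hold, so $v\in I[u,x]$; together with $uv\in E(G)$ this gives $v\in S(u,x)$.

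There is no real obstacle. The only point needing care is degenerate cases where some of the five vertices coincide, for instance $x=y$ or $w=x$. No separate treatment is required, because the translation of each membership statement is valid for arbitrary vertices, including coinciding ones. In particular, $wx\in E(G)$ forces $w\neq x$, and likewise $uv\in E(G)$ forces $u\neq v$.
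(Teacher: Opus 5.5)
Your proposal is correct and is the same argument the paper intends. The paper just asserts that \AX{Sm} translates directly into graph-theoretic terms via $S(u,v)=I[u,v]\cap N(u)$; you spell out that term-by-term translation, including why the conjunct $uv\in E(G)$ in the conclusion is already supplied by the hypotheses, and your reading that coinciding vertices are allowed matches the paper, which only afterwards (Lemma~\ref{lem:5only}) reduces the check to pairwise distinct vertices.
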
	

Recalling that the interval function of a graph satisfies in particular the
betweenness axioms~\cite{mulder1980interval},
\begin{itemize}
\item[\AX{b2}] $x\in I[u,v]$ implies $I[u,x]\subseteq I[u,v]$, and 
\item[\AX{b3}] $x\in I[u,v]$ and $y\in I[u,x]$ implies $x\in I[y,v]$,
\end{itemize}
we observe that the smoothness condition \AX{Sm} is trivially satisfied
whenever any of the five vertices in its precondition coincide.
\begin{lemma}
  \label{lem:5only}
  Let $G$ be a graph and suppose $X=\{u,v,w,x,y\}$ has cardinality $|X|\le
  4$ with $uv\in E(G)$, $wx\in E(G)$, $v\in I[u,w]\cap I[u,y]$, and $x\in
  I[w,y]$. Then $v\in I[u,x]$.
\end{lemma}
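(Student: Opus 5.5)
Since $|X|\le 4$, at least two of the five labels denote the same vertex. The plan is to run through the possible coincidences, using only the hypotheses $uv,wx\in E(G)$ (so $u\neq v$ and $w\neq x$), the three interval memberships, and the betweenness axioms \AX{b2} and \AX{b3}. In each case we show $v\in I[u,x]$. The pairs $\{u,v\}$ and $\{w,x\}$ cannot coincide, which leaves eight pairs.

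\emph{Coincidences involving $x$.} If $x=v$, then $v\in I[u,x]$ trivially. If $x=u$, then $x\in I[w,y]$ and $v\in I[u,w]$ give $d(w,u)=d(w,v)+1$. If also $v\in I[u,y]$, then $d(y,u)=d(y,v)+1$. Adding these yields $d(w,v)+d(v,y)=d(w,y)-2<d(w,y)$, contradicting the triangle inequality. So this case cannot occur. If $x=y$, then $v\in I[u,y]=I[u,x]$ directly. If $x=w$, the edge $wx$ is a loop, which is impossible.

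\emph{Coincidences not involving $x$.}
\begin{itemize}
\item If $u=w$, then $v\in I[u,u]=\{u\}$, contradicting $u\neq v$. Likewise $u=y$ is impossible.
\item If $v=y$, then $x\in I[w,v]$ and $v\in I[u,w]$. By \AX{b2}, $I[w,x]\subseteq I[w,v]$ and $I[v,w]\subseteq I[u,w]$, so $x\in I[u,w]$. Then \AX{b3} applied to $v\in I[u,w]$, read as $v\in I[w,u]$, together with $x\in I[w,v]$ gives $v\in I[x,u]$, as required.
\item If $v=w$, then $w$ is adjacent to both $u$ and $x$, and $v\in I[u,y]$. The inequality $d(u,x)\le 2$ together with $d(u,v)=1$, $d(v,x)=1$ means we only need $d(u,x)=2$. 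Since $x\in I[v,y]$ and $v\in I[u,y]$, the same \AX{b2}/\AX{b3} chaining as in the previous case gives $v\in I[u,x]$: indeed, $x\in I[v,y]$ gives $d(v,y)=1+d(x,y)$, and $d(u,y)=1+d(v,y)=2+d(x,y)$, so $d(u,x)\ge d(u,y)-d(x,y)=2$.
\item If $w=y$, then $x\in I[w,w]=\{w\}$, so $x=w$, contradicting $wx\in E(G)$.
\end{itemize}

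The only step needing any care is the distance bookkeeping in the cases $x=u$, $v=y$ and $v=w$. The cleanest approach there is to translate every interval membership into distance equalities and apply the triangle inequality, rather than invoking \AX{b2} and \AX{b3} abstractly. Every remaining case is immediate.
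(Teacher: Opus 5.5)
Your proof is correct and takes the same route as the paper: a case analysis over the possible coincidences among $u,v,w,x,y$, using $u\ne v$, $w\ne x$, the betweenness axioms \AX{b2}/\AX{b3}, and direct distance bookkeeping in a few cases. Your treatment of $x=u$ is more careful than the paper's: the paper calls that case ``trivial'' even though the conclusion $v\in I[u,u]=\{u\}$ is false there, whereas you show correctly that the hypotheses are contradictory.
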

\begin{proof}
  We proceed case by case: (i) $u\in\{v,w,x,y\}$: By assumption $u\ne v$.
  If $u=w$ then $v\in I[u,u]=\{u\}$ and hence $v=u$, a contradiction.  If
  $u=x$ the implication is trivial.  If $u=y$ the $v\in I[u,u]$, again a
  contradiction.
  (ii) $v\in\{w,x,y\}$: If $v=x$ the conclusion is trivially true.  If
  $v=w$ we have $v\in I[u,v]\cap I[u,y]$ iff $v\in I[u,y]$, $x\in I[v,y]$
  and by \AX{b2} $I[v,y]\subseteq I[u,y]$ and thus $x\in I[u,y]$. Now
  \AX{b3} implies $v\in I[u,x]$.  If $v=y$ then $v\in I[u,w]$ and $x\in
  I[v,w]$ then \AX{b3} implies $v\in I[u,x]$.
  (iii) $w\in\{x,y\}$: By assumption $w\ne x$.  If $w=y$ we have $x\in
  I[w,w]$, a contradiction.
  (iv) If $x=y$ we have $v\in I[u,w]\cap I[u,x]$ and hence the implication
  is trivial.
\end{proof}
It therefore sufficies to consider \AX{Sm} for any five pairwise distinct
vertices. Relaxing in \AX{Sm} the precondition that $uv$ and $wx$ are edges
yields the formally stronger axiom
\begin{itemize}
\item[\AX{Sm*}] If $v\in I[u,w]\cap I[u,y]$, and $x\in I[w,y]$, then $v\in
  I[u,x]$.
\end{itemize}
It is not difficult to show, however, that \AX{Sm*} is in fact equivalent
to smoothness for all connected graphs.
\begin{lemma}
  \label{lem:interval2}
  A connected graph $G$ is smooth if and only if any five vertices
  $u,v,w,x,y$ satisfy \AX{Sm*}.
\end{lemma}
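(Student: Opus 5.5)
One direction is immediate: \AX{Sm*} has a weaker hypothesis than the condition in Observation~\ref{obs:interval}, so every graph satisfying \AX{Sm*} is smooth. For the converse, assume $G$ is smooth and take $u,v,w,x,y$ with $v\in I[u,w]\cap I[u,y]$ and $x\in I[w,y]$. The plan is to reduce to the edge case by induction, first on $d(u,v)$ and then on $d(w,x)$. If $u=v$ or $w=x$ the conclusion is trivial.

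\emph{Reducing to $uv\in E(G)$.} Suppose $d(u,v)\ge 2$ and pick a neighbor $v'$ of $u$ on a shortest $u,v$-path, so $v'\in I[u,v]$.
\begin{itemize}
\item By \AX{b2}, $I[u,v]\subseteq I[u,w]\cap I[u,y]$, so $v'\in I[u,w]\cap I[u,y]$.
\item The edge case then gives $v'\in I[u,x]$.
\item We still need $v\in I[u,x]$. Apply the induction hypothesis with $v'$ in the role of $u$: we have $v\in I[v',w]$ and $v\in I[v',y]$ by \AX{b3}, because $v'\in I[u,v]$ and $v\in I[u,w]$ (similarly for $y$).
\item Since $d(v',v)<d(u,v)$, induction yields $v\in I[v',x]$.
\item Together with $v'\in I[u,x]$ we get $d(u,x)=d(u,v')+d(v',x)=d(u,v')+d(v',v)+d(v,x)=d(u,v)+d(v,x)$, because $v'\in I[u,v]$. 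Hence $v\in I[u,x]$.
\end{itemize}
So it suffices to treat the case $uv\in E(G)$ with $wx$ arbitrary.

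\emph{Reducing to $wx\in E(G)$.} Now let $uv\in E(G)$ and induct on $d(w,x)$. If $d(w,x)\ge2$, choose $x'\in I[w,x]\cap N(w)$.
\begin{itemize}
\item By \AX{b2}, $x'\in I[w,x]\subseteq I[w,y]$.
\item Smoothness applied to $(u,v,w,x',y)$ gives $v\in I[u,x']$.
\item Next we need $x\in I[x',y]$. Since $x'\in I[w,x]$ and $x\in I[w,y]$, \AX{b3} (with $w$ as base) gives $x\in I[x',y]$.
\item Apply induction to the quintuple $(u,v,x',x,y)$. Its hypotheses are $v\in I[u,x']$, $v\in I[u,y]$, and $x\in I[x',y]$, and $d(x',x)<d(w,x)$. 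This gives $v\in I[u,x]$.
\end{itemize}

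The whole argument rests on choosing the intermediate vertices $v'$ and $x'$ and checking that they satisfy the hypotheses, using only \AX{b2}, \AX{b3} and the distance additivity along geodesics. The main point to verify carefully is the first reduction. There one must combine $v'\in I[u,x]$ with $v\in I[v',x]$ to obtain $v\in I[u,x]$, and the outer induction on $d(u,v)$ must be nested correctly around the inner one. I would structure it as follows: first prove the case $uv\in E(G)$ with $wx$ arbitrary by induction on $d(w,x)$, then do induction on $d(u,v)$. In the latter, the ``edge case'' invoked is this already established one, not the original smoothness condition.
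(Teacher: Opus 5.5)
Your proof is correct and takes the same route as the paper. You first remove the edge condition on $wx$ by stepping from $w$ toward $x$ along a geodesic. You then remove the edge condition on $uv$ by stepping along a shortest $u,v$-path and chaining: the paper chains via \AX{b2}, you via distance additivity, which is equivalent. Your explicit nested inductions and \AX{b2}/\AX{b3} checks of the hypotheses make the paper's somewhat terse iteration precise.
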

\begin{proof}
  Since \AX{Sm*} holds in particular also if $uv\in E(G)$ and $wx\in E(G)$,
  \AX{Sm} holds trivially. For the converse, first observe that for any
  shortest path $w=x_0,x_1,\dots,x_{k-1},x_k=y$, we have $x_i\in
  I[x_{i-1},y]$ and $x_{i-1}x_i\in E(G)$. Thus, $v\in I(u,x_{i-1})$ implies,
  by \AX{Sm}, that $v\in I(u,x_i)$ for $1\le i<k$. Hence we can omit the
  condition $wx\in E(G)$. Now consider $v\in I[u,w]\cap I[u,y]$ and let
  $u_1,u_2,\dots,u_{k-1},u_k=v$ be a shortest path in $I[u,w]\cap I[u,y]$.
  Then $v\in I[u_{k-1},w]\cap I[u_{k-1},y]$ and thus by $x\in I[w,y]$
  implies (by the modification of \AX{Sm} justified above) that $v=u_k\in
  I[u_{k-1},x]$. Iterating this argument yields $u_j\in I[u_{j-1},x]$ and
  the betweeness property (b2), i.e., $I[u_j,x]\subseteq I[u_{j-1},x]$
  finally implies $v\in I[u,x]$.
\end{proof}

Clearly, smoothness is a graph property that is based solely on its
distance function $d$ as can be seen by the following alternative
formulation. In the light of Lemma~\ref{lem:interval2}, we have
\begin{observation}
  A connected graph $G$ is smooth if and only if the geodetic distances
  between any five vertices $u,v,w,x,y$ satisfy the following condition:
\begin{itemize}
\item[]
  If $d(u,w)=d(u,v)+d(v,w)$, $d(u,y)=d(u,v)+d(v,y)$, and
  $d(w,y)=d(w,x)+d(x,y)$, then $d(u,x)=d(u,v)+d(v,x)$.
\end{itemize}
\label{obs:dist} 
\end{observation}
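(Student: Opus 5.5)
This is a direct translation of Lemma~\ref{lem:interval2} into the language of the distance function. The plan is to use the definition $I[a,b]=\{z\in V:\, d(a,z)+d(z,b)=d(a,b)\}$ to rewrite each interval membership in \AX{Sm*} as a distance equation, and then invoke Lemma~\ref{lem:interval2}.

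First I note the correspondences:
\begin{itemize}
\item $v\in I[u,w]$ is exactly $d(u,w)=d(u,v)+d(v,w)$;
\item $v\in I[u,y]$ is exactly $d(u,y)=d(u,v)+d(v,y)$;
\item $x\in I[w,y]$ is exactly $d(w,y)=d(w,x)+d(x,y)$;
\item $v\in I[u,x]$ is exactly $d(u,x)=d(u,v)+d(v,x)$.
\end{itemize}
Symmetry of $d$ lets me write the summands in whichever order is convenient.

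With these, the hypothesis of the displayed condition is literally the hypothesis $v\in I[u,w]\cap I[u,y]$ and $x\in I[w,y]$ of \AX{Sm*}. Likewise, its conclusion is literally $v\in I[u,x]$. So the displayed condition holding for all quintuples is the same statement as \AX{Sm*} holding for all quintuples.

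By Lemma~\ref{lem:interval2}, the latter is equivalent to smoothness of $G$, which proves both directions at once. There is no real obstacle. The only point needing care is that Lemma~\ref{lem:interval2} already removed the edge conditions $uv,wx\in E(G)$ from Observation~\ref{obs:interval}; without that lemma the purely metric formulation would not follow directly.
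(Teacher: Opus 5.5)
Your proposal is correct and is the paper's own argument: the paper states this observation ``in the light of Lemma~\ref{lem:interval2}''. That is, you rewrite each interval membership in \AX{Sm*} as a distance equation using $I[a,b]=\{z:\, d(a,z)+d(z,b)=d(a,b)\}$, and then invoke the equivalence of \AX{Sm*} with smoothness.
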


We remark that smoothness of a graph can be checked efficiently. The
  geodetic distances between all pairs of vertices can be computed in
  $O(|V|^3)$ time and $O(|V|^2)$ space \cite{Dijkstra:59}. The distance
  condition in Obs.~\ref{obs:dist} can then be verified in constant time
  and space for any five vertices $u,v,w,x,y\in V$. Moreover, it suffices
  to consider the cases where $uv$ and $xw$ are edges. \AX{Sm} can thus be checked in $O(|E|^2|V|)$ time, which, in connected graphs, dominates
  the effort for pre-computing the distances. Memory consumption is
  dominated by storing the distance matrix.

The smoothness property is not hereditary because the deletion of vertices
(and edges) affects the distance function of the graph. Smoothness,
however, persists in a special class of (induced) subgraphs. Let
$W\subseteq V(G)$ and denote by $G[W]$ the subgraph of $G$ induced by $W$.
$G[W]$ is an \emph{isometric subgraph} of $G$ if it satisfies
$d_{G[W]}(u,v)=d_G(u,v)$. Note that any isometric subgraph is induced
because preservation of the distances in particular also preserves edges
and non-edges.
\begin{observation}
\label{ob:isometric}
  If $G$ is a smooth graph and $G[W]$ is an isometric subgraph of $G$,
  then $G[W]$ is smooth.
\end{observation}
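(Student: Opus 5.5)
The plan is to use the purely metric characterization of smoothness in Observation~\ref{obs:dist}, which is justified by Lemma~\ref{lem:interval2}. By that observation, $G$ is smooth if and only if every $5$-tuple of its vertices satisfies a certain implication. Both the hypotheses and the conclusion of this implication are equalities between sums of geodetic distances. So whether a given $5$-tuple satisfies it depends only on the restriction of the distance function to those five vertices.

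The steps are as follows.
\begin{enumerate}
\item Note that $H=G[W]$ is connected: for $u,v\in W$ we have $d_H(u,v)=d_G(u,v)<\infty$. Hence Observation~\ref{obs:dist} applies to $H$.
\item Take arbitrary $u,v,w,x,y\in W$ with $d_H(u,w)=d_H(u,v)+d_H(v,w)$, $d_H(u,y)=d_H(u,v)+d_H(v,y)$ and $d_H(w,y)=d_H(w,x)+d_H(x,y)$.
\item Since $d_H=d_G$ on $W\times W$, these three equalities also hold for $d_G$.
\item Since $G$ is smooth, Observation~\ref{obs:dist} gives $d_G(u,x)=d_G(u,v)+d_G(v,x)$.
\item Translating back via isometry gives the same equality for $d_H$. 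Thus $H$ satisfies the condition and is smooth.
\end{enumerate}

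There is no real obstacle here. The only point to watch is that we must use the distance (equivalently \AX{Sm*}) form rather than the edge form \AX{Sm}. With the edge form one would also need edges of $H$ to coincide with edges of $G$ among vertices of $W$. That holds anyway, since isometric subgraphs are induced, as remarked before the statement. Equivalently, $I_H[a,b]=I_G[a,b]\cap W$ for $a,b\in W$, and Observation~\ref{obs:interval} transfers directly as well.
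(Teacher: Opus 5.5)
Your proposal is correct and follows the paper's own reasoning. The paper states this observation without a formal proof. It relies on the preceding remarks: smoothness depends only on the distance function (Observation~\ref{obs:dist}, via Lemma~\ref{lem:interval2}), and isometric subgraphs preserve distances, hence also edges and non-edges. Your transfer of the distance condition from $d_G$ to $d_H=d_G|_{W\times W}$ spells that reasoning out explicitly.
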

In particular, therefore, connected induced subgraphs of smooth
distance-hereditary graphs are again smooth.

A \textit{retraction} of a connected graph $G$ is an idempotent
homomorphism $\phi:V(G)\to V(G)$, that is, $\phi$ satisfies
$d(\phi(x),\phi(y))\leq d(x,y)$ and $\phi(\phi(x))=\phi(x)$ for all $x,y\in
V(G)$. The subgraph $H\coloneqq G[\phi(V(G))]$ induced by the image of a
retraction is called a \textit{retract} of $G$. It is in particular, an
isometric subgraph of $G$, see \cite{hammack2011handbook}.
\begin{observation}
  If $G$ is a smooth graph and $H$ is a retract of $G$, then $H$ is smooth.
\end{observation}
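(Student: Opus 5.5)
The plan is to reduce the statement to Observation~\ref{ob:isometric}: it suffices to show that a retract $H=G[\phi(V(G))]$ is an isometric subgraph of $G$. The paper cites this as a standard fact from \cite{hammack2011handbook}, but I would include the short argument so that the proof is self-contained.

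Let $a,b\in\phi(V(G))$, so that $\phi(a)=a$ and $\phi(b)=b$ by idempotence. Since $H$ is a subgraph of $G$, we always have $d_G(a,b)\le d_H(a,b)$. For the reverse inequality, take a shortest $a,b$-path $a=z_0,z_1,\dots,z_k=b$ in $G$ with $k=d_G(a,b)$. Because $\phi$ is a homomorphism, consecutive images satisfy $d(\phi(z_{i-1}),\phi(z_i))\le 1$. So the images are either equal or adjacent in $G$, and since both lie in $\phi(V(G))$, they are equal or adjacent in the induced subgraph $H$. After deleting repetitions, $\phi(z_0),\dots,\phi(z_k)$ is therefore an $a,b$-walk in $H$ of length at most $k$. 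This gives $d_H(a,b)\le d_G(a,b)$, and in particular $H$ is connected. Hence $d_H=d_G$ on $V(H)$, so $H$ is isometric.

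Observation~\ref{ob:isometric} now yields that $H$ is smooth. Alternatively, one can argue directly via Observation~\ref{obs:dist}: the five-point distance condition involves only distances among the chosen vertices, and these coincide in $H$ and $G$.

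I do not expect a real obstacle. The only point needing care is that the images of adjacent vertices may coincide, so the image of the path is a walk with possible repetitions rather than a path. This is harmless, because a walk of length at most $k$ still bounds the distance.
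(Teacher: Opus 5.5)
Your proposal is correct and takes the same route as the paper: the paper observes that a retract is an isometric subgraph (citing \cite{hammack2011handbook}) and then applies Observation~\ref{ob:isometric}. The only difference is that you prove the isometry yourself by projecting a shortest path to a walk in $H$. That argument is sound, including your use of idempotence to fix the endpoints and your handling of adjacent vertices whose images coincide.
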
 

It is not difficult to verify that complete graphs, block graphs, and
cycles are smooth graphs, see \cite{nebesky2004properties}. On the other
hand, one easily checks that the five-vertex graphs $K_{2,3}$ and
$K_{1,1,3}$ are not smooth, see Figure~\ref{fig:non-smooth}.

\begin{figure}[h]
  \centering
  \begin{tikzpicture}[every node/.style={circle, draw, fill=white, inner sep=1.5pt}]
    \node (u) at (-6,0) {$u$};
    \node (v) at (-4,2) {$v$};
    \node (x) at (-4,-2) {$x$};
    \node (w) at (-2,0) {$w$};
    \node (y) at (-4,0) {$y$};
    \draw (u) -- (v) --(y) -- (x) -- (w) -- (v);
    \draw (x) -- (u);
    \node (u) at (0,0) {$u$};
    \node (v) at (2,2) {$v$};
    \node (x) at (2,-2) {$x$};
    \node (w) at (6,0) {$w$};
    \node (y) at (4,0) {$y$};
    \draw (u) -- (v) -- (y) -- (x) -- (w) -- (v);
    \draw (u) -- (x) -- (v);
  \end{tikzpicture}
  \caption{(a): $K_{2,3}$ (b): $K_{1,1,3}$
    The vertices $u,v,w,x,y$ are labeled to highlight the violation of
    \AX{Sm}.}  
  \label{fig:non-smooth}
\end{figure}
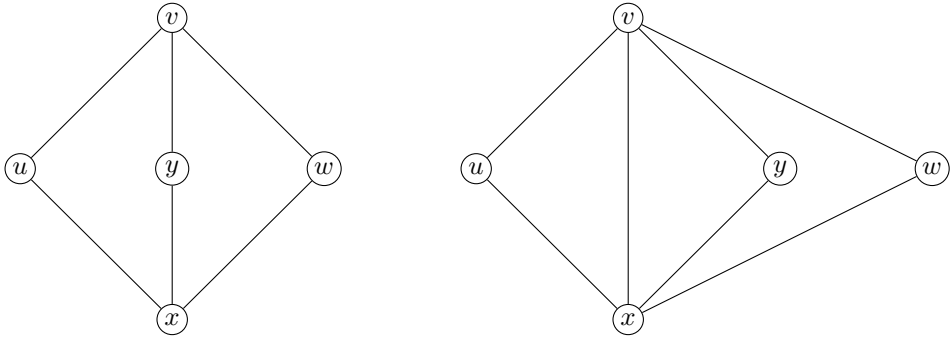

Graphs of diameter $2$ play a special role because of the following
simple fact:
\begin{observation}
  If $H$ is an induced subgraph of $G$ and $H$ has diameter at most $2$,
  then $H$ is an isometric subgraph of $G$.
\end{observation}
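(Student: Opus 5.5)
The plan is to compare distances in $H$ and in $G$ directly, pair by pair. Since $H$ is an induced subgraph of $G$, every path in $H$ is also a path in $G$. This gives the trivial inequality $d_G(a,b)\le d_H(a,b)$ for all $a,b\in V(H)$. For isometry it remains to prove the reverse inequality $d_H(a,b)\le d_G(a,b)$, and I will do this by cases on the value of $d_H(a,b)$, which by the diameter assumption lies in $\{0,1,2\}$. If $H$ is disconnected its diameter is infinite, so the hypothesis forces $H$ to be connected and $d_H$ to be defined.

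The cases are handled as follows.
\begin{itemize}
\item If $d_H(a,b)=0$, then $a=b$ and also $d_G(a,b)=0$.
\item If $d_H(a,b)=1$, then $ab\in E(H)\subseteq E(G)$, so $d_G(a,b)=1$.
\item If $d_H(a,b)=2$, then $a\neq b$ and $ab\notin E(H)$. Because $H$ is induced, $ab\notin E(G)$ as well. Hence $d_G(a,b)\ge 2$, and together with $d_G(a,b)\le d_H(a,b)=2$ this gives $d_G(a,b)=2$.
\end{itemize}
Thus $d_H=d_G$ on $V(H)$, i.e., $H=G[V(H)]$ is an isometric subgraph of $G$.

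There is no real obstacle. The one step that needs care is the distance-$2$ case, and it is exactly where inducedness is used: it guarantees that a non-edge of $H$ is also a non-edge of $G$, so a distance of $2$ in $H$ cannot collapse to $1$ in $G$. Combined with Observation~\ref{ob:isometric}, the statement then shows that induced $K_{2,3}$ and $K_{1,1,3}$, both of which have diameter $2$, are isometric in any graph containing them.
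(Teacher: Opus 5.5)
Your proof is correct and follows the paper's argument exactly: adjacent vertices of $H$ remain at distance $1$ in $G$, and for non-adjacent ones inducedness gives $2\le d_G(a,b)\le d_H(a,b)\le 2$. One small misattribution in your closing remark: the isometry of induced $K_{2,3}$ and $K_{1,1,3}$ already follows from this observation alone, and it is combining it with Observation~\ref{ob:isometric} that shows smooth graphs are $K_{2,3}$- and $K_{1,1,3}$-free.
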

\begin{proof}
  For two distinct vertices we have $x,y\in V(H)\subseteq V(G)$ we have
  $d_G(x,y)=d_H(x,y)$ if $xy\in E(H)$ and $2\le d_G(x,y)\le d_H(x,y)\le 2$
  if $x$ and $y$ are not adjacent.
\end{proof}

 \begin{figure}[h]
   \centering
   \begin{tikzpicture}[every node/.style={circle, draw, fill=white, inner sep=1.5pt}]
     \node (u) at (-1.5,0) {$x$};
     \node (v) at (1.5,0) {};
     \node (x) at (0,1) {$w$};
     \node (w) at (-2,2) {$y$};
     \node (y) at (2,2) {$u$};
     \node (z) at (0,3) {$v$};
     \draw (u) -- (x) --(v) -- (y) -- (z) -- (w) -- (u);
     \draw (u) -- (v);
     \draw (x) -- (z);
   \end{tikzpicture}
   \caption{A $K_{2,3}$-and $K_{1,1,3}$-free graph that is not smooth. The
     five vertices violating \AX{Sm} are labeled.}
   \label{fig:brunograph1}
  \end{figure}
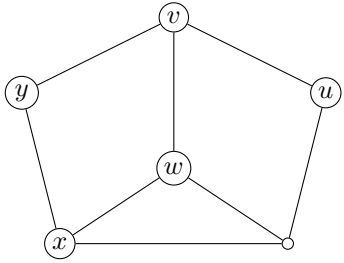
 
If $H$ is a graph, then $G$ is \emph{$H$-free} if it does not contain $H$
as an induced subgraph.
\begin{observation}
  If $H$ is a non-smooth graph with diameter at most two and $G$ is
  smooth, then $G$ is $H$-free.
\end{observation}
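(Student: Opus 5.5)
We argue by contraposition, combining the two preceding observations. Suppose $G$ is smooth but contains an induced subgraph $G[W]$ isomorphic to $H$. Since $H$ has diameter at most $2$, the previous observation shows that $G[W]$ is an isometric subgraph of $G$. Then Observation~\ref{ob:isometric} implies that $G[W]$ is smooth.

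Smoothness depends only on the distance function, as made explicit in Observation~\ref{obs:dist}. It is therefore invariant under graph isomorphism, so $H$ would be smooth as well. This contradicts the hypothesis that $H$ is non-smooth. Hence $G$ cannot contain $H$ as an induced subgraph, i.e., $G$ is $H$-free.

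One point needs checking: the diameter observation tacitly assumes $H$ is connected, so that $d_H$ is finite and its argument $2\le d_G(x,y)\le d_H(x,y)\le 2$ applies to non-adjacent pairs. This is automatic here, because a graph of diameter at most $2$ is connected by definition, and smoothness is defined only for connected graphs. Otherwise there is no real obstacle; the result is a direct composition of the earlier observations.

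As a corollary, the examples in Figure~\ref{fig:non-smooth} show that every smooth graph is $K_{2,3}$-free and $K_{1,1,3}$-free. Both graphs have diameter $2$ and violate \AX{Sm} on the labelled five vertices.
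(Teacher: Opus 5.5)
Your proof is correct and is essentially the argument the paper intends: the paper gives no separate proof, treating the statement as the immediate composition of the preceding diameter-two observation (induced subgraphs of diameter at most $2$ are isometric) with the fact that isometric subgraphs of smooth graphs are smooth. Your added remarks on connectivity and isomorphism invariance are sound. Strictly speaking the argument is by contradiction rather than contraposition, but this does not affect correctness.
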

While this does not yield a characterization of smooth graphs, it does
restrict the class of smooth graphs to the graphs that are $K_{2,3}$-free
and $K_{1,1,3}$-free. This is not sufficient, however, to characterize
smooth graphs. For instance, the graph in Fig.~\ref{fig:brunograph1}
contains neither $K_{2,3}$ nor $K_{1,1,3}$ as an induced subgraph and is
still not smooth.

It is interesting to observe that the smoothness property appears as an
important ingredient in investigations of separation properties $S_4$ and
$S_3$ in abstract convexities, i.e., set systems $\mathcal{C}$ on a finite,
non-empty ground set $V$ satisfying (i) $V,\emptyset \in \mathcal{C}$ and
(ii) $\mathcal{C}$ is closed under intersections, see \cite{Vel:93}. For
two subsets $A$ and $B$ of the vertex set of a graph $G$, the \emph{shadow}
or \emph{extension} of $A$ with respect to $B$ is defined in
\cite{Chepoi:94} as the set $A/B \coloneqq \{x\in V:\, \langle B\cup
\{x\}\rangle \cap A\ne\emptyset\}$. Convexity of $A/B$ for all convex sets
$A,B$ is equivalent to the separation property $S_4$ and the convexity of
$u/B$ for any point $u$ and any convex set $B$ is equivalent to the
separation property $S_3$ in the abstract convexity and, in particular, in
the geodesic convexity of a graph, see \cite{Vel:93,Chepoi:94} and the
recent survey \cite{Chepoi:24}.  In the special case of $A$ and $B$
consisting of a single vertex, the set 
$$v/u \coloneqq \{x\in V:\, v\in\langle\{x,u\}\rangle\}$$ 
is the \emph{point-shadow} of $v$ with respect to $u$. The sets
\begin{equation} 
  U(v,u)\coloneqq\{x\in V:\, v\in I[x,u]\}
\end{equation}
are closely related to point-shadows. The following result shows that they
are also tightly linked to smoothness.

\begin{lemma}
  \label{lem:U(a,b)-convex} 
  A graph $G$ is a smooth graph if and only if the sets $U(v,u)$ are geodesically
  convex for any two adjacent vertices $u,v$ in $G$.
\end{lemma}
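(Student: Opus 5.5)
We prove both directions directly from the interval formulation in Observation~\ref{obs:interval} and Lemma~\ref{lem:interval2}. The key is to read the smoothness condition as a statement about $U(v,u)$: the hypotheses $v\in I[u,w]\cap I[u,y]$ say $w,y\in U(v,u)$, and the conclusion $v\in I[u,x]$ says $x\in U(v,u)$. So \AX{Sm} states exactly that $U(v,u)$ is closed under taking vertices $x\in I[w,y]$ adjacent to $w$. The extra hypothesis $uv\in E(G)$ matches the restriction to adjacent $u,v$ in the statement.

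\emph{($\Rightarrow$)} Let $G$ be smooth and $uv\in E(G)$. Take $w,y\in U(v,u)$ and any $x\in I[w,y]$. We need $v\in I[u,x]$, i.e.\ $x\in U(v,u)$. This is immediate from \AX{Sm*}, which holds by Lemma~\ref{lem:interval2}. Alternatively, without invoking the lemma, we can walk along a shortest $w,y$-path $w=x_0,x_1,\dots,x_k=y$ through $x$. Since $x_{i}\in I[x_{i-1},y]$ and $x_{i-1}x_i\in E(G)$, applying \AX{Sm} inductively to $u,v,x_{i-1},x_i,y$ gives $x_i\in U(v,u)$ for every $i$. Since every $x\in I[w,y]$ lies on such a geodesic, $U(v,u)$ is convex.

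\emph{($\Leftarrow$)} Assume $U(v,u)$ is convex for every edge $uv$, and take $u,v,w,x,y$ as in Observation~\ref{obs:interval}: $uv,wx\in E(G)$, $v\in I[u,w]\cap I[u,y]$ and $x\in I[w,y]$. Then $w,y\in U(v,u)$, so convexity gives $x\in I[w,y]\subseteq U(v,u)$, i.e.\ $v\in I[u,x]$. Hence \AX{Sm} holds. Here the edge hypothesis $uv\in E(G)$ is used precisely to place us among the adjacent pairs for which convexity is assumed.

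The only point needing care is the bookkeeping of argument order: $U(v,u)=\{x: v\in I[x,u]\}$, while \AX{Sm} is written with $I[u,\cdot]$; symmetry of $I$ resolves this. Degenerate coincidences among the five vertices need no separate treatment, since the argument above never uses distinctness, and in any case Lemma~\ref{lem:5only} covers them.
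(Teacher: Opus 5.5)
Your proof is correct and follows essentially the same route as the paper. The ($\Leftarrow$) direction is identical. For ($\Rightarrow$), the paper argues by contradiction: it walks along a shortest path from one endpoint to the first vertex outside $U(v,u)$ and applies \AX{Sm} step by step, which is the contrapositive form of your direct induction along the geodesic $w=x_0,\dots,x_k=y$. Your shortcut via \AX{Sm*} and Lemma~\ref{lem:interval2} is also legitimate, since that lemma precedes this one and its first step is exactly this path-walking argument.
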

\begin{proof}
  Suppose that the sets $U(v,u)$ are geodesically convex for all adjacent
  pairs of vertices $u,v$.  Consider a graph $G$ and two adjacent vertices
  $u,v$ in $G$. Let $w,y$ be two distinct vertices in $U(v,u)$.  That is,
  $v\in I[u,w]$ and $v\in I[u,y]$. Now let $x \in I[w,y]$ by a vertex
  adjacent to $w$. Since $U(v,u)$ is geodesically convex, we have that
  $I[w,y] \subseteq U(v,u)$ and thus $x\in U(v,u)$, proving that $G$ is
  smooth.
 
  For the converse, assume that $G$ is a smooth graph and there are two
  adjacent vertices $u,v$ such that the set $U(v,u)$ is not geodesically
  convex. Then there exists two vertices $x,y\in U(v,u)$ and a vertex $z\in
  I[x,y]$ such that $z\notin U(v,u)$. That is, there exists a shortest
  $x,y$-path containing $z$, say $P= x,x_1\ldots x_i,x_{i+1},\ldots
  z,\ldots, y_{j-1},y_j\ldots,y$.  Since $z\in I[x,y]$, we
  can assume without loss of generality that the vertices $x_i,x_{i+1},y_{j-1},y_j$ in $P$ are such
  that $x_i,y_j\in U(v,u)$, but the vertices of the subpath $x_{i+1},\ldots
  z,\ldots, y_{j-1}$ of $P$ are not contained in $U(v,u)$.  Since $x,y\in
  U(v,u)$, we have $v\in I[u,x] \cap I[u,y]$. Since $G$ is smooth, $x_1\in
  U(v,u)$. Applying the smoothness property of $G$ iteratively to
  $u,v,y,x_1,\ldots x_i$ it follows that $x_2,\ldots x_i \in
  U(v,u)$. Applying again the smoothness property of $G$ to $u,v,y,x_i$, we
  obtain $x_{i+1}\in U(v,u)$, a contradiction.  Hence $I[x,y] \subseteq
  U(v,u)$, and $U(v,u)$ is convex.
\end{proof}

We observe that if $u$ and $v$ are adjacent vertices in $G$, then
\begin{equation} 
  U(v,u)=W_{vu}\coloneqq \{x\in V(G):\, d(x,v)<d(x,u)\}.
\end{equation}
A \emph{partial cube} is an isometric subgraph of a hypercube. As proved by
Djokovi{\'c}~\cite{Djokovic:73}, a connected bipartite graph $G$ is a
partial cube if and only if for every edge $uv\in E(G)$ the sets $W_{uv}$
and $W_{vu}$ are geodesically convex. By using
Lemma~\ref{lem:U(a,b)-convex}, we in turn infer the following
characterization of smooth graphs within the class of connected bipartite
graphs.
\begin{proposition} 
\label{p:partialcubes}
  A connected bipartite graph is smooth if and only if it is a partial
  cube.
\end{proposition}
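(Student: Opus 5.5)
The plan is to combine Lemma~\ref{lem:U(a,b)-convex} with Djokovi\'c's characterization, using the identity $U(v,u)=W_{vu}$ for adjacent $u,v$ stated just above. First I will check that identity. If $uv\in E(G)$, then $|d(x,u)-d(x,v)|\le 1$ for every vertex $x$. In a bipartite graph $d(x,u)$ and $d(x,v)$ have different parity, so $d(x,u)\neq d(x,v)$. Hence exactly one of $d(x,v)=d(x,u)-1$ and $d(x,u)=d(x,v)-1$ holds. The first condition is equivalent to $v\in I[x,u]$, that is, $x\in U(v,u)$. This gives $U(v,u)=W_{vu}$, and symmetrically $U(u,v)=W_{uv}$. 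It also shows that $\{W_{uv},W_{vu}\}$ partitions $V(G)$, although we will not actually need this.

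For the forward direction, let $G$ be connected, bipartite and smooth. By Lemma~\ref{lem:U(a,b)-convex}, $U(v,u)$ is geodesically convex for every ordered pair of adjacent vertices. Applying this to both orderings $(u,v)$ and $(v,u)$ of each edge, we get that $W_{vu}$ and $W_{uv}$ are both convex for every edge $uv$. Djokovi\'c's theorem then says $G$ is a partial cube.

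For the converse, let $G$ be a partial cube. Such a graph is connected, since it is isometric in a hypercube, and bipartite, since it is a subgraph of a bipartite graph. By Djokovi\'c's theorem, $W_{vu}$ is convex for every edge $uv$. By the identity above, every $U(v,u)$ with $u,v$ adjacent is convex. Lemma~\ref{lem:U(a,b)-convex} then gives that $G$ is smooth.

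There is an alternative route for the converse: Observation~\ref{ob:isometric} reduces it to showing that hypercubes are smooth. That is easy via Observation~\ref{obs:dist}, because in $Q_n$ the distance is the Hamming distance and betweenness is coordinatewise. Still, the Djokovi\'c route is uniform and I will use it.

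The main obstacle is only the careful handling of the identity $U(v,u)=W_{vu}$. The definition of $W_{vu}$ as written, $d(x,v)<d(x,u)$, must match the orientation in $U(v,u)$, and bipartiteness is genuinely needed to rule out ties $d(x,u)=d(x,v)$. Everything else is a direct invocation of earlier results.
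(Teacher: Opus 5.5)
Your proof is correct and is the paper's argument: Lemma~\ref{lem:U(a,b)-convex}, the identity $U(v,u)=W_{vu}$ for adjacent $u,v$, and Djokovi\'c's characterization, each used in both directions. One small correction to your closing remark: the identity $U(v,u)=W_{vu}$ holds for adjacent $u,v$ in every graph, because both membership conditions reduce to $d(x,v)=d(x,u)-1$ and vertices with $d(x,u)=d(x,v)$ lie in neither set, so bipartiteness is needed only to invoke Djokovi\'c's theorem, not for the identity.
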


It is already mentioned above that the geodesic convexity of the sets $A/B$
for all convex sets $A,B$ is equivalent to the separation property $S_4$,
which is equivalent to the well-known \textit{Pasch property} of a graph
$G$, which says that, \emph{for any five vertices $p,a,b, a',b'$ in $G$
satisfying, $a'\in I(p,a), b'\in I(p,b)$, the set $I(a',b)\cap I(b',a)$ is
non-empty} \cite{Vel:93,Chepoi:94}. We say that a graph has the Pash
property if its interval function is satisfying the Pasch property.
Proposition~4.10 of \cite{Vel:93} implies that the intervals $I[u,v]$ are
convex for every pair of vertices $u,v$ in a graph $G$ with the Pasch
property. In particular, therefore, $v/u=U(v,u)$.
Lemma~\ref{lem:U(a,b)-convex} now implies:
\begin{corollary}
  If $G$ is a graph with the Pasch property, then $G$ is smooth.
\end{corollary}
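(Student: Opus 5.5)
The plan is to reduce the statement to Lemma~\ref{lem:U(a,b)-convex}: it suffices to show that for every edge $uv$ the set $U(v,u)$ is geodesically convex. I will do this by identifying $U(v,u)$ with the point-shadow $v/u$ and then using the Pasch property to show that point-shadows are convex.

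\emph{Step 1: $U(v,u)=v/u$ for adjacent $u,v$.} By Proposition~4.10 of \cite{Vel:93}, every interval $I[x,u]$ in a graph with the Pasch property is convex. Hence $\langle\{x,u\}\rangle=I[x,u]$, so $v\in\langle\{x,u\}\rangle$ if and only if $v\in I[x,u]$. This gives $v/u=U(v,u)$ for all pairs $u,v$, not only adjacent ones.

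\emph{Step 2: $v/u$ is convex.} The Pasch property is equivalent to $S_4$: the sets $A/B$ are convex whenever $A$ and $B$ are convex \cite{Vel:93,Chepoi:94}. Taking $A=\{v\}$ and $B=\{u\}$, both singletons and hence geodesically convex, shows that $v/u$ is convex. If one wants to avoid citing the $S_4$ equivalence, convexity can be checked directly from Pasch. Take $w,y\in U(v,u)$ and $x\in I[w,y]$; we must show $v\in I[u,x]$. By Lemma~\ref{lem:interval2} this is exactly the \AX{Sm*} condition, so Observation~\ref{obs:interval} allows us to assume that $x$ is adjacent to $w$. One would then apply Pasch to the triangle $u,w,y$ with the points $v\in I[u,w]$ and $x\in I[w,y]$. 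This produces a vertex in $I[v,y]\cap I[x,u]$, which one then combines with $v\in I[u,y]$ and interval convexity.

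\emph{Step 3: conclude.} By Steps~1 and~2, $U(v,u)$ is convex for all adjacent $u,v$, and Lemma~\ref{lem:U(a,b)-convex} then yields smoothness.

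The main obstacle is the direct verification in Step~2, if the cited $S_4$/Pasch equivalence is not accepted as a black box. Pasch only supplies some point $z\in I[v,y]\cap I[x,u]$, whereas we need $v$ itself to lie in $I[u,x]$. The first attempt would be to use convexity of $I[u,y]$ (which contains $v$ and $y$) and of $I[u,x]$, together with the betweenness axioms \AX{b2} and \AX{b3}, to push $z$ back to $v$. If that fails, I would simply rely on the cited equivalence $S_4\Leftrightarrow$ Pasch.
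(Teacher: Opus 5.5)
Your proposal is correct and is essentially the paper's argument: Proposition~4.10 of \cite{Vel:93} makes intervals convex and hence $v/u=U(v,u)$, the $S_4$/Pasch equivalence makes $v/u$ convex, and Lemma~\ref{lem:U(a,b)-convex} concludes. Incidentally, the direct check you flagged as the main obstacle closes at once: from $z\in I[v,y]\cap I[x,u]$ and $v\in I[u,y]$, \AX{b3} yields $v\in I[u,z]$ and \AX{b2} yields $I[u,z]\subseteq I[u,x]$, so Pasch gives \AX{Sm*} directly, with no need for interval convexity or the $S_4$ equivalence.
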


More generally, if $I$ satisfies the monotonicity property \AX{m}, which
states that $x,y\in I[u,v]$ implies $I[x,y]\subseteq I[u,v]$, viz, every
interval is convex, then we again have $v/u=U(v,u)$. This lets us conclude:
\begin{corollary}
  If the interval function of $G$ is monotone, then $G$ is smooth if and
  only if the point-shadows of all pairs of adjacent vertices are
  geodesically convex.
\end{corollary}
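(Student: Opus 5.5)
The plan is to reduce the statement to Lemma~\ref{lem:U(a,b)-convex} by showing that, under monotonicity, the point-shadow $v/u$ coincides with $U(v,u)$ for every pair of vertices $u,v$, and in particular for adjacent ones. Once $v/u=U(v,u)$ is established, Lemma~\ref{lem:U(a,b)-convex} applies verbatim: $G$ is smooth if and only if all $U(v,u)$ with $uv\in E(G)$ are geodesically convex, which is then the same as convexity of all point-shadows $v/u$ of adjacent pairs.

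The key step is the identity $\langle\{x,u\}\rangle=I[x,u]$ for all $x,u$. First, $I[x,u]\subseteq\langle\{x,u\}\rangle$ holds in any graph, since a convex set containing $x$ and $u$ contains $I[x,u]$. For the reverse inclusion, monotonicity \AX{m} gives that $I[x,u]$ is itself geodesically convex: for any $a,b\in I[x,u]$ we have $I[a,b]\subseteq I[x,u]$. Thus $I[x,u]$ is a convex set containing $x$ and $u$, and so it contains the smallest such set, $\langle\{x,u\}\rangle$. Hence
\begin{equation*}
  v/u=\{x\in V:\, v\in\langle\{x,u\}\rangle\}=\{x\in V:\, v\in I[x,u]\}=U(v,u).
\end{equation*}

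Substituting this identity into Lemma~\ref{lem:U(a,b)-convex} gives both directions of the equivalence at once, so no separate argument is needed for either implication. There is no real obstacle here; the only point needing care is that the convex hull is the \emph{smallest} convex superset, so convexity of $I[x,u]$ is what forces the hull to collapse to the interval. The adjacency of $u$ and $v$ is used only through Lemma~\ref{lem:U(a,b)-convex}.
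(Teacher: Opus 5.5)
Your proposal is correct and follows the paper's argument: monotonicity makes every interval convex, so $\langle\{x,u\}\rangle=I[x,u]$, hence $v/u=U(v,u)$, and Lemma~\ref{lem:U(a,b)-convex} then gives the equivalence. The only difference is that you spell out the hull-equals-interval step, which the paper states in one line.
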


\begin{figure}[h]
  \centering
  \begin{tikzpicture}[every node/.style={circle, draw, fill=white, inner sep=1.5pt}]
    \node (u) at (-1,1.5) {$u$};
    \node (v) at (1,1.5)  {$v$};
    \node (x) at (1,3.5)  {$x$};
    \node (y) at (1,-0.5) {$y$};
    \node (w) at (3,1.5)  {$w$};
    \draw (u) -- (x) -- (w) -- (y) -- (u);
    \draw (u) -- (v) -- (x);
    \draw (v) -- (y);
   \end{tikzpicture}
  \caption{The graph $W_4^-$, the ``4-wheel minus a spoke'', is smooth.
    However, the point-shadow $v/u=\{v,w\}$ is not convex.}
  \label{fig:W4-}
\end{figure}
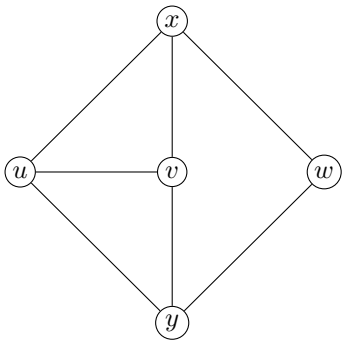

In general, smoothness does not imply convexity of point-shadows of
adjacent vertices. A counterexample is the graph $W_4^-$, the wheel of size
$4$ with a missing spoke, shown in Fig.~\ref{fig:W4-}. We shall see below,
however, that convexity of point-shadows is sufficient to ensure smoothness
and hence the ``smoothness property'' is a strictly weaker property than
the property of ``the convexity of point-shadow sets''.
  
For a subset $S\subseteq V(G)$, the convex hull $\langle S
\rangle$ can be determined iteratively starting from
$I^0[S]\coloneqq S$ by setting
\begin{equation}
  I^k[S] \coloneqq \bigcup_{x,y\in I^{k-1}[S] }I[x,y] \qquad\text{for }k>0 
\end{equation}
Then $\langle S\rangle =I^{k-1}[S] = I^k[S]$ for some $k>0$. The
  smallest such value of $k$ is known as the geodetic iteration number of
  $S$ \cite{Moscarini:20}.
\begin{lemma}
  If $G$ is a graph such that $v/u$ is geodesically convex for any two
  adjacent vertices $u,v$ in $G$, then $G$ is a smooth graph.
\end{lemma}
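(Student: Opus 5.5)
The plan is to reduce to Lemma~\ref{lem:U(a,b)-convex}: for adjacent $u,v$ I will show that $U(v,u)$ is geodesically convex. The natural route is to prove $U(v,u)=v/u$ whenever $v/u$ is convex. Then the hypothesis gives convexity of $U(v,u)$, and smoothness follows from that lemma.

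The inclusion $U(v,u)\subseteq v/u$ is immediate. If $v\in I[x,u]$, then $v\in I[x,u]\subseteq\langle\{x,u\}\rangle$, so $x\in v/u$.

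The reverse inclusion is the main step. Let $x\in v/u$, so $v\in\langle\{x,u\}\rangle$. I would first observe that $x\in v/u$ trivially, so $v/u$ is a convex set containing $x$. I also need $u$'s position. Note that $u\notin v/u$ is not automatic, since $\langle\{u,u\}\rangle=\{u\}\not\ni v$; in fact $u\notin v/u$, which is good. The idea is then to use convexity of $v/u$ in a different way, working with the hull iteration $I^k[\{x,u\}]$.

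Suppose $v\notin I[x,u]$. Let $k$ be minimal with $v\in I^k[\{x,u\}]$, and assume $k\ge 2$. Then $v\in I[a,b]$ for some $a,b\in I^{k-1}[\{x,u\}]$. I would argue by induction on $d(x,u)$ (or on $k$) that every vertex $z$ lying in $\langle\{x,u\}\rangle$ with $v\in\langle\{z,u\}\rangle$ already satisfies $v\in I[z,u]$. The key lever is this: since $v/u$ is convex and $x\in v/u$, take a shortest $x,u$-path $x=p_0,p_1,\dots,p_m=u$. The vertices $p_i$ in $v/u$ form an initial segment, by convexity together with $u\notin v/u$. Let $p_j$ be the last vertex of that segment lying in $v/u$. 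Then $v\in\langle\{p_j,u\}\rangle$ but $v\notin\langle\{p_{j+1},u\}\rangle$. If $p_{j+1}=u$, then $p_j$ is a neighbor of $u$ with $v$ in the hull of the edge $p_ju$, which forces $p_j=v$, so $v\in I[x,u]$. Otherwise I aim for a contradiction by reducing the distance, via induction with $p_j$ in place of $x$.

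The main obstacle I expect is exactly this reduction: showing that $v\in\langle\{x,u\}\rangle$ forces $v$ onto some geodesic from $x$ to $u$ under convexity of all $v'/u'$. If direct equality $U(v,u)=v/u$ proves elusive, my fallback is to verify \AX{Sm} directly, i.e.\ the five-vertex condition of Observation~\ref{obs:interval}. Let $v\in I[u,w]\cap I[u,y]$ and $x\in I[w,y]$ with $wx\in E(G)$. Then $w,y\in U(v,u)\subseteq v/u$, so convexity gives $x\in v/u$. Since $x$ is adjacent to $w$ and $v\in I[u,w]$, we have $d(x,u)\ge d(w,u)-1$. I would then combine this with $v\in\langle\{x,u\}\rangle$ to derive $d(x,u)=d(x,v)+1$, using that a vertex of the hull at distance one from $u$ must lie on a geodesic. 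I would handle this distance comparison by cases $d(x,u)\in\{d(w,u)-1,d(w,u),d(w,u)+1\}$, treating the middle case with the convexity of $v/u$ applied to the pair $w,x$.
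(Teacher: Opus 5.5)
\textbf{The gap is the reverse inclusion $v/u\subseteq U(v,u)$, the step you flag yourself, and it cannot be closed the way you set it up.} Your geodesic argument covers two cases: $p_{j+1}=u$, and $j\ge 1$, which reduces via induction and \AX{b2} to a smaller distance. The essential case $j=0$, $p_1\neq u$ is left open. As you formulate it (``$U(v,u)=v/u$ whenever $v/u$ is convex'') the claim is false.

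Take vertices $u,v,w,x,y,z,a,b,c_1,c_2$ with edges $uv$, $ua$, $ub$, $xa$, $xb$, the edges $uc_i,vc_i,ac_i,bc_i$ ($i=1,2$), and the $5$-cycle $v\,w\,x\,y\,z\,v$.
\begin{itemize}
\item Here $d(u,x)=d(v,x)=2$ and $I[u,x]=\{u,a,b,x\}$, while $c_1,c_2\in I[a,b]$ and $v\in I[c_1,c_2]$. So $x\in v/u$ but $v\notin I[u,x]$. On the geodesic $x,a,u$ you are exactly in the case $j=0$, $p_1=a$.
\item $v/u=\{v,w,x,y,z\}$, since the neighbours $a,b,c_1,c_2$ of $u$ are excluded (an edge is convex). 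This induced $5$-cycle is convex.
\item Yet $v\in I[u,w]\cap I[u,y]$ (here $d(u,y)=3$), $x\in I[w,y]$, $wx\in E(G)$, and $v\notin I[u,x]$. So convexity of $v/u$ does not even give \AX{Sm} at the edge $uv$.
\end{itemize}
This also defeats your fallback. Its auxiliary claim that a neighbour of $u$ in $\langle\{x,u\}\rangle$ lies in $I[x,u]$ fails here for $v$ (and for $c_1$). The configuration is exactly your ``middle case'' $d(x,u)=d(w,u)$, where convexity applied to the edge $wx$ says nothing.

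\textbf{You also cannot borrow the paper's argument for this step.} After obtaining $x\in v/u$, the paper tries to prove $v\in I[u,x]$ through the hull iteration, using only convexity of $v/u$, so the same example refutes it. Concretely, with $a_1=a$ the vertex $u$ lies on a shortest $v,a_1$-path, but $v,u,a,x$ has length $3>d(v,x)=2$. So the asserted extension to a shortest $v,x$-path fails, and with it the conclusion $u\in\langle\{v,x\}\rangle$.

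The missing ingredient, in both your proposal and the paper, is the convexity of shadows of \emph{other} edges. In the example, $c_1/u$ and $u/v$ are not convex; for instance $x,c_1\in c_1/u$ but $a\in I[c_1,x]$. Here is the kind of argument required, in the case $d(x,u)=d(x,v)=2$ with $x\in v/u$:
\begin{itemize}
\item No common neighbour of $x$ and $u$ can be adjacent to $v$, since it would lie in $I[x,v]\subseteq v/u$. Hence all such neighbours lie in $U(u,v)\subseteq u/v$.
\item If two of them are non-adjacent, then $x\in u/v$ by convexity of $u/v$. Then $\langle\{x,u\}\rangle\subseteq u/v$, which does not contain $v$.
\item Otherwise $I[x,u]$ is already convex and misses $v$.
\end{itemize}
Either way this is a contradiction. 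A complete proof needs an argument of this type for arbitrary $d(x,u)$, and neither your proposal nor the paper supplies it.
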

\begin{proof}
Assume that the sets $v/u$ are geodesically convex for any two
  adjacent vertices $u,v$. Let $w,y$ be two distinct vertices in $v/u$ such
  that $v\in I[u,w]\cap I[u,y]$. Now let $x\in I[w,y]$ be adjacent to
  $w$. Since $v/u$ is geodesically convex, we have $I[w,y]\subseteq v/u$
  and consequently $x\in v/u$. Also, $v/u = \langle v/u \rangle$. It
  remains to show that $v\in I[u,x]$.

Suppose, for contradiction, that $v\notin I[u,x]$. There exists a positive integer $k$ such that $\langle\{u,x\}\rangle = I^{k}[\{u,x\}] =I^{k-1}[\{u,x\}]$. That is, there exist
  vertex pairs, $(a_1,b_1)$, $(a_2,b_2)$, \ldots, $(a_k,b_k)$ such that
  $a_1,b_1\in I[u,x]$, $a_2,b_2\in I[a_1,b_1]$, \ldots, $a_k,b_k\in
  I[a_{k-1}, b_{k-1}]$ such that $v\in I[a_k,b_k]$. That is, there exist
  two distinct shortest $u,x$-paths, say $P_1$ and $P_2$, such that $P_1$
  contains $a_1$, $P_2$ contains $b_1$. Moreover, there is a shortest
    $a_k,b_k$-path containing $v$. Now consider the $v,a_1$-shortest path,
  say $Q_1$. Let $P_{a_1}$ be the $u,a_1$-subpath of $P_1$. Denoting
    by $\ell(P)$ the length a path $P$, we claim that
  $\ell(Q_1)=\ell(P_{a_1})$. Since $uv$ is an edge, it is clear that,
  $\ell(P_{a_1})$ is either $\ell(Q_1)$ or $\ell(Q_1)+1$. If
  $\ell(P_{a_1})=\ell(Q_1)+1$, then $u$ lies in a shortest $v,a_1$-path,
  and the shortest $v,a_1$-path through $u$ can clearly be extended to a shortest
  $v,x$-path through $P_1$.
    It follows that $u,x\in \langle
    \{v,x\}\rangle \subseteq \langle v/u \rangle$ because $v,x\in
    v/u$. Moreover, $v/u = \langle v/u \rangle$ implies $u\in v/u$, which
    is not possible according to the definition of $v/u$. Therefore we
    have $\ell(Q_1)=\ell(P_{a_1})$. Now, the shortest $v,a_1$-path $Q_1$
   can be extended to a shortest $v,x$-path, since
  otherwise, if there is a shorter $v,x$-path, then $v$ lies on a shortest
  $u,x$-path, which contradicts our supposition.  Since $a_1\in \langle
  \{v,x\}\rangle \subseteq \langle v/u \rangle$, we have $a_1\in \langle
  v/u \rangle$. Therefore $a_1\in v/u$.
  \\ If $v\in I[u,a_1]$, then the shortest $u,a_1$-path through $v$ can be
  extended to a shortest $u,x$-path through $v$, using the same arguments
  as in the above paragraph, and we obtain a contradiction to our
  supposition. Therefore $v\notin I[u,a_1]$. Now, replacing $x$ by $a_1$ and
  following the same arguments as in the previous situation, we obtain
  vertices $u_2, v_2$ lying in two distinct shortest $u,a_1$-paths such
  that $v\in \langle \{u_2,v_2\}\rangle$ and $u_2\in \langle v/u
  \rangle$. Therefore $u_2\in v/u$.
  \\ Now, replacing $a_1$ by $u_2$ and continuing the previous arguments,
  we obtain vertices $u_3$, \ldots, $u_j$, \ldots, $u_m$ lying on some
  shortest $u,a_1$-path (we can assume without loss of generality it is $P_{a_1}$) such that
  $u_3, \ldots u_j \ldots \in v/u$. Since there are finitely many vertices in $P_{a_1}$, we finally reach to the vertex $u_m$ in $P_{a_1}$ and hence in
  $P_1$, which is adjacent to the vertex $u$ and $v$ so that $u,v,u_m$ form
  a triangle and $u_m\in v/u$, which is clearly not possible, since $uu_m$ is an
  edge, yielding the final contradiction. We therefore conclude that
    $v\in I[u,x]$.
\end{proof}

\section{Product Graphs} 

\subsection{Cartesian Product}

The Cartesian product is one of the four standard graph products, and the
most important one in metric graph theory;
see~\cite{hammack2011handbook}. If $G$ and $H$ are graph, then the
\textit{Cartesian product} $G \square H$ of $G$ and $H$ has the vertex set
$V (G)\times V(H)$, and two vertices $(g,h)$ and $(g_0,h_0)$ are adjacent
if and only if $g=g_0$ and $hh_0 \in E(H)$, or $gg_0 \in E(G)$ and
$h=h_0$. The subgraphs of $G\square H$ induced by all vertices with fixed
$h$ or fixed $g$, called the layers of the product, are isomorphic to $G$
or $H$, respectively.  As a consequence, one can express the distance
$d_{G\square H}((g,h),(g_0,h_0))$ between two vertices in the product
graphs in terms of the distances $d_G(g,g_0)$ and $d_H(h,h_0)$ in the
factors:
\begin{equation}
  \label{eq:distance}
    d_{G \square H} ((g,h), (g_0,h_0 )) = d_G (g,g_0 ) + d_H (h,h_0).
\end{equation}
Equation~(\ref{eq:distance}) immediately implies that every layer is an
isometric subgraph of the Cartesian product $G\square H$. In addition, the
following relationship between the intervals in the two factors and their
product can be easily seen by using~\eqref{eq:distance} (see
e.g.\ \cite{mulder1980interval}).
\begin{lemma}
  \label{interval-product}
  If $G\square H$ is the Cartesian product of two (connected) graphs $G$
  and $H$ and $(g,h)$ and $(g_0, h_0)$ are its vertices, then
  \begin{equation}
    I_{G\square H}[(g, h), (g_0, h_0)] = I_G[g, g_0] \times I_H [h,
      h_0] = I_{G\square H} [(g_0, h), (g, h_0)]
    \label{eq:interval-product}
  \end{equation}
\end{lemma}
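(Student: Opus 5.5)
The plan is to prove both equalities directly from the distance formula \eqref{eq:distance}, using nothing beyond the triangle inequality in each factor. Throughout, write $d=d_{G\square H}$.

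\textbf{First equality.} Take an arbitrary vertex $(a,b)$ of $G\square H$. By \eqref{eq:distance},
\[
d((g,h),(a,b))+d((a,b),(g_0,h_0)) = \bigl[d_G(g,a)+d_G(a,g_0)\bigr]+\bigl[d_H(h,b)+d_H(b,h_0)\bigr],
\]
while $d((g,h),(g_0,h_0))=d_G(g,g_0)+d_H(h,h_0)$. Each bracket is at least the corresponding factor distance, by the triangle inequality in $G$ and in $H$. Hence the total equals $d_G(g,g_0)+d_H(h,h_0)$ if and only if both brackets attain their lower bounds. This is exactly the statement $a\in I_G[g,g_0]$ and $b\in I_H[h,h_0]$, which proves $I_{G\square H}[(g,h),(g_0,h_0)]=I_G[g,g_0]\times I_H[h,h_0]$.

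\textbf{Second equality.} Apply the first equality to the pair $(g_0,h),(g,h_0)$. This gives $I_{G\square H}[(g_0,h),(g,h_0)]=I_G[g_0,g]\times I_H[h,h_0]$. Since intervals are symmetric, $I_G[g_0,g]=I_G[g,g_0]$, so this set coincides with the middle term.

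No step is a real obstacle. The only point requiring care is the "if and only if" in the first equality: a sum of two nonnegative slacks vanishes exactly when each slack vanishes.
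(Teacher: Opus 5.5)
Your proposal is correct and follows the route the paper indicates: the paper gives no written proof and says the lemma is easily seen from the distance formula \eqref{eq:distance} (citing Mulder). Your argument, in which the total slack is a sum of two nonnegative per-factor slacks and the second equality follows by applying the first to $(g_0,h),(g,h_0)$ together with $I_G[g_0,g]=I_G[g,g_0]$, is that derivation written out.
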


\begin{theorem}\label{thm:smoothproduct}
If $G$ and $H$ are connected graphs, then $G$ and $H$ are smooth graphs if
and only if $G\square H$ is a smooth graph.
\end{theorem}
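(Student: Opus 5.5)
The "if" direction is immediate. By \eqref{eq:distance} every layer $G^h=\{(g,h): g\in V(G)\}$ is an isometric subgraph of $G\square H$ and is isomorphic to $G$. The same holds for the $H$-layers. So if $G\square H$ is smooth, Observation~\ref{ob:isometric} shows that $G$ and $H$ are smooth.

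For the converse we assume that $G$ and $H$ are smooth and verify the interval form \AX{Sm*} of Lemma~\ref{lem:interval2} in $G\square H$. Write the five vertices as $u=(u_1,u_2)$, $v=(v_1,v_2)$, $w=(w_1,w_2)$, $x=(x_1,x_2)$, $y=(y_1,y_2)$. By Lemma~\ref{interval-product}, intervals in the product are products of intervals in the factors. The hypotheses $v\in I[u,w]\cap I[u,y]$ and $x\in I[w,y]$ therefore split coordinatewise:
\begin{equation*}
v_i\in I[u_i,w_i]\cap I[u_i,y_i] \quad\text{and}\quad x_i\in I[w_i,y_i] \qquad (i=1,2),
\end{equation*}
with the intervals taken in $G$ for $i=1$ and in $H$ for $i=2$. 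Since $G$ and $H$ are smooth, Lemma~\ref{lem:interval2} gives \AX{Sm*} in each factor, hence $v_1\in I_G[u_1,x_1]$ and $v_2\in I_H[u_2,x_2]$. Applying Lemma~\ref{interval-product} once more gives $v\in I_G[u_1,x_1]\times I_H[u_2,x_2]=I_{G\square H}[u,x]$. Another application of Lemma~\ref{lem:interval2} shows that $G\square H$ is smooth.

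Degenerate coordinates need no separate treatment. If several of the coordinates $u_i,v_i,w_i,x_i,y_i$ coincide, \AX{Sm*} in the factor still applies, because Lemma~\ref{lem:interval2} quantifies over any five vertices, not necessarily distinct ones. This is exactly why we use \AX{Sm*} rather than \AX{Sm}: projecting the edges $uv$ and $wx$ to a factor may produce a single vertex instead of an edge, and the edge-free form absorbs this automatically.

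The main point to check is therefore that Lemma~\ref{lem:interval2} really is an equivalence for arbitrary (possibly repeated) quintuples. If any doubt remained there, the fallback would be to reduce coincident cases to Lemma~\ref{lem:5only} together with the betweenness axioms \AX{b2} and \AX{b3}. Everything else is a routine coordinatewise computation.
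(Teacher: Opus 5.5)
Your proof is correct. It follows a slightly different and cleaner route than the paper's, and the converse direction (layers are isometric, then Observation~\ref{ob:isometric}) is the same as in the paper.

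For the main direction, the paper works with the edge form \AX{Sm} in step-system language. It splits according to whether $uv$ is a $G$-edge or an $H$-edge, so that one coordinate of $u$ and $v$ coincides. It then projects the hypotheses to the other factor and separately handles whether $wx$ projects to an edge or to a single vertex, including the subcase $w_2=y_2$. Finally it reassembles the conclusion via Lemma~\ref{interval-product}.

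You instead pass through Lemma~\ref{lem:interval2} to the edge-free form \AX{Sm*}, which is a pure implication among betweenness statements. Since intervals of $G\square H$ are exactly products of factor intervals, such an implication transfers coordinatewise with no case analysis, and your argument is symmetric in the two factors. It also shows more generally that any condition of the form ``a conjunction of statements $a\in I[b,c]$ implies a single such statement'' is inherited by Cartesian products.

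The only cost is the dependence on Lemma~\ref{lem:interval2} for arbitrary, possibly repeated, quintuples, and this is unproblematic. In a smooth graph, \AX{Sm} holds for all quintuples, with the degenerate ones automatic by Lemma~\ref{lem:5only}. The path-walking proof of Lemma~\ref{lem:interval2} never uses distinctness, so the fallback you mention is not needed. The paper's version avoids this detour through \AX{Sm*} and stays within the signpost formalism, at the price of the case split.
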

\begin{proof}
  Let $G$ and $H$ be smooth graphs. Denote by $S$ the step system of
  $G\square H$ and consider vertices $(u_1,u_2)$, $(v_1,v_2)$, $(y_1,y_2)$,
  $(w_1,w_2)$, and $(x_1,x_2)$ in $V(G\square H)$ such that $(v_1,v_2)\in
  S((u_1,u_2),(w_1,w_2))$, $(v_1,v_2)\in S((u_1,u_2),(y_1,y_2))$ and
  $(x_1,x_2)\in S((w_1,w_2),(y_1,y_2))$.  Thus, we have
  $(u_1,u_2)(v_1,v_2)\in E(G\square H)$, and $(x_1,x_2)(w_1,w_2)\in
  E(G\square H)$, and hence either $u_1=v_1$ and $u_2v_2\in E(H)$ or
  $u_1v_1\in E(G)$ and $u_2=v_2$. Similarly, we have either $x_1=w_1$ and
  $x_2w_2\in E(H)$ or $x_1w_1\in E(G)$ or $x_2=w_2$.
  \par\noindent\textbf{Case 1:} $u_1=v_1$ and $u_2v_2\in E(H)$.  Now, we
  derive that $v_2\in S(u_2,w_2)$ and $v_2\in S(u_2,y_2)$ in $H$.  First,
  assume that $y_2\ne w_2$.  If $x_2w_2\in E(H)$, then $x_2\in
  S(w_2,y_2)$. Since $H$ is smooth, we have $v_2\in S(u_2,x_2)$. By
  Lemma~\ref{interval-product}, we infer that $(v_1,v_2)$ lies in a
  shortest path between $(u_1,u_2)$ and $(x_1,x_2)$, and hence
  $(v_1,v_2)\in S((u_1,u_2),(x_1,x_2))$. If $x_2=w_2$, then knowing $v_2\in
  S(u_2,w_2)$, which is equivalent to $v_2\in S(u_2,x_2)$, yields the same
  conclusion.  Second, assume that $y_2=w_2$. This implies $x_2=w_2$, and
  the same argument as above yields $(v_1,v_2)\in S((u_1,u_2),(x_1,x_2))$.
  \par\noindent\textbf{Case 2:} $u_1v_1\in E(G)$ and $u_2=v_2$. A similar
  argument can be used, this time based on the smoothness of $G$, to arrive
  at the same conclusion. Therefore $G\square H$ is smooth.
		
  Conversely, let $G\square H$ be a smooth graph. Since $G$ and $H$ appear
  as layers in $G\square H$, they are isometric subgraphs in $G\square H$,
  and, by Observation~\ref{ob:isometric}, they are smooth graphs.
\end{proof}

Since the edge $K_2$ is trivially smooth, hypercubes, i.e., $n$-fold
Cartesian products of edges are smooth. More generally, since complete
graphs are smooth, Hamming graphs, i.e., the Cartesian products of cliques
are smooth. This observation allows us to establish smoothness of many
important graph classes.

\emph{Partial Hamming graphs}~\cite{Bresar:01} are isometric subgraphs of
Hamming graphs, and thus a generalization of partial cubes. We immediately observe that partial Hamming graphs are smooth. This in turn implies that all members of
the hierarchy of graph classes discussed in~\cite{Imrich:98}, which in
particular contains the median graphs, are smooth. Smoothness of median
graphs was proved directly in~\cite[Proposition 4]{nebesky2004properties}.
Quasi-median graphs are a subclass of partial Hamming graphs, hence they
are smooth graphs. Alternatively, quasi-median graphs are precisely the
retracts of Hamming graphs~\cite{Chung:89,wilkeit1992retracts}, which gives
the same conclusion. In addition, their superclass of quasi-semimedian
graphs are also partial Hamming graphs (cf.~\cite{Bresar:03}), and hence
smooth.

The step system of connected graph $G$ satisfies axiom \AX{Sm} and an
additional axiom \AX{BP} [$v\in S(u,v)$ implies $v\in S(u,x)$ or $u\in
  S(v,x)$] if and only $G$ is a partial cube \cite{Changat:26a}. Step
systems satisfying \AX{BP}, in turn, are exactly connected
bipartite graphs. These results give an alternative argument that bipartite connected graphs are smooth if and only if they are partial cubes (see Proposition~\ref{p:partialcubes}).

\subsection{Strong Product}

The \textit{strong product} $G \boxtimes H$ of $G$ and $H$ is a graph with
vertex set $V (G)\times V(H)$, where $(g, h)(g',h')\in E(G\boxtimes H)$ if
and only if $gg'\in E(G)$ and $h=h$, or $g=g'$ and $hh'\in E(H)$, or
$gg'\in E(G)$ and $hh'\in E(H)$; see~\cite{hammack2011handbook}. For the
distances in the strong product we have
\begin{equation}
  \label{eq:distance-strong}
  d_{G \boxtimes H}((g,h),(g_0,h_0)) = \max\{d_G(g,g_0),d_H(h,h_0)\}
\end{equation}
Again, the subgraphs of $G\boxtimes H$ induced by the vertices with a fixed
first or second coordinate are isomorphic to $G$ and $H$, respectively.
For any two vertices in a $H$-layer, therefore we have $d_{G \boxtimes
  H}((g,h),(g_0,h_0)) = \max\{d_G(g,g_0),0\} = d_G(g,g_0)$, and for any two
vertices in a $G$-layer, we have $d_{G \boxtimes H}((g,h),(g_0,h_0)) =
\max\{d_G(g,g_0),0\} = d_G(g,g_0)$, and thus both $G$ and $H$ are
(isomorphic to) isometric subgraphs of $G\boxtimes H$. 

We will need the following auxiliary result.
\begin{lemma}
  \label{lem:pathprojection}
If $P:(g_0,h_0),(g_1,h_1),\dots,(g_{n-1},h_{n-1}),(g_n,h_0)$ is a
shortest $(g_0,h_0),(g_n,h_0)$-path in $G \boxtimes H$, then
the projection $p_G(P):g_0,g_1,\dots, g_{n-1},g_n$ is a
shortest $g_0,g_n$-path.
\end{lemma}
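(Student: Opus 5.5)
The argument is a length comparison between $P$ and its projection. First, the projection $p_G(P)$ is a walk in $G$. For each $i$, the vertices $(g_i,h_i)$ and $(g_{i+1},h_{i+1})$ are adjacent in $G\boxtimes H$, so by definition of the strong product either $g_i=g_{i+1}$ or $g_ig_{i+1}\in E(G)$. Hence consecutive entries of $g_0,g_1,\dots,g_n$ are equal or adjacent, and $p_G(P)$ is a walk of length at most $n$ once repeated consecutive vertices are suppressed.

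Second, we compute $n$ using the distance formula \eqref{eq:distance-strong}. Since $P$ is a shortest path, $n=d_{G\boxtimes H}((g_0,h_0),(g_n,h_0))=\max\{d_G(g_0,g_n),d_H(h_0,h_0)\}=d_G(g_0,g_n)$. So $g_0,\dots,g_n$ is a sequence of $n+1$ vertices, with consecutive terms at distance at most $1$, joining two vertices at distance exactly $n$.

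Third, we show that every step is a genuine edge and that $p_G(P)$ is a path. By the triangle inequality,
\[
n=d_G(g_0,g_n)\le\sum_{i=0}^{n-1}d_G(g_i,g_{i+1})\le n,
\]
so equality holds throughout and $d_G(g_i,g_{i+1})=1$ for every $i$. Thus every $g_ig_{i+1}$ is an edge. The same sandwich applied to partial sums gives $d_G(g_0,g_j)=j$ for all $j$. Hence the $g_j$ are pairwise distinct, and $p_G(P)$ is a $g_0,g_n$-path of length $d_G(g_0,g_n)$, i.e., a shortest path.

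There is no real obstacle here. The only point needing care is that a strong-product edge may project to a repeated vertex. The counting argument rules this out: the assumption that both endpoints share the second coordinate $h_0$ forces the $G$-distance to carry the whole length $n$.
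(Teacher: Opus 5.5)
Your proof is correct and follows essentially the same route as the paper. Both arguments bound each projected step by $d_G(g_i,g_{i+1})\le 1$, use the strong-product distance formula to get $d_G(g_0,g_n)=n$, and conclude that a walk of $n$ steps between vertices at distance $n$ must be a shortest path; you skip the paper's redundant detour through the copy $P^*$ of $P'$ in the $h_0$-layer, and your triangle-inequality sandwich (giving $d_G(g_0,g_j)=j$) justifies ``no stationary steps and no repeated vertices'' more explicitly than the paper's one-line argument.
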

\begin{proof}
  First note that $d_{G\boxtimes H}((g_{i-1},h_{i-1}),(g_{i},h_{i}))=1$ and
  thus Eq.~\eqref{eq:distance-strong} implies $d_G(g_i,g_{i-1})\le 1$.
  Therefore, $p_G(P)$ is a trail in $G$ with possibly duplicate vertices an
  hence contains a $g_0,g_n$-path $P'$ of length $\ell(P')\le
  \ell(P)$. Moreover, $P'$ has an isomorphic copy $P^*$ in the $h_0$-layer
  of $G\boxtimes H$. In particular $P^*$ is therefore a
  $(g_0,h_0),(g_n,h_0)$-path of length $\ell(P^*)=\ell(P')$. Thus we have
  $d_{G\boxtimes H}((g_0,h_0),(g_n,h_0))=n=\ell(P)\le
  \ell(P^*)=\ell(P')\le\ell(P)$, which yields
  $\ell(P^*)=\ell(P')=\ell(P)=n$. Therefore, $P^*$ is an alternative
  shortest $(g_0,h_0),(g_n,h_0)$-path.  Finally,
  Eq.~\eqref{eq:distance-strong} implies $d_{G \boxtimes
    H}((g_0,h_0),(g_n,h_0)) =
  \max\{d_G(g_0,g_n),d_H(h_0,h_0)\}=d_G(g_0,g_n)=n$. Since the trail
  $p_G(P)$ has length $n$ by construction, it cannot contain any duplicate
  vertices, and hence $p_G(P)=P'$ is a shortest $g_0,g_n$-path in $G$.
\end{proof}

This result has an immediate translation to the step systems:
\begin{corollary}
  \label{cor:step}
  If $S$ is the step system of $G\boxtimes H$ and $S_G$ the step
  system of $G$, then $(x_1,x_2)\in S((u_1,h)(v_1,h))$ implies $(x_1,h)\in
  S((u_1,h)(v_1,h))$ and $x_1\in S_G(u_1,v_1)$.
\end{corollary}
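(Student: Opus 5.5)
The plan is to unpack the step-system statement into a statement about a single shortest path, and then read off both conclusions from Lemma~\ref{lem:pathprojection}. Suppose $(x_1,x_2)\in S((u_1,h),(v_1,h))$. By definition of the step system of $G\boxtimes H$, the vertex $(x_1,x_2)$ is adjacent to $(u_1,h)$ and lies in $I[(u_1,h),(v_1,h)]$. In particular $(u_1,h)\neq(v_1,h)$, so $n\coloneqq d_{G\boxtimes H}((u_1,h),(v_1,h))\ge 1$. I would first fix a shortest path
\[
P:(u_1,h)=(g_0,h_0),(g_1,h_1)=(x_1,x_2),(g_2,h_2),\dots,(g_n,h_n)=(v_1,h).
\]
It is obtained by taking the edge $(u_1,h)(x_1,x_2)$ followed by a shortest $(x_1,x_2),(v_1,h)$-path; concatenating gives length $n$ because $(x_1,x_2)$ lies in the interval. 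Its two endpoints have the same second coordinate $h$, so $P$ has exactly the shape required in Lemma~\ref{lem:pathprojection}.

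Next I apply Lemma~\ref{lem:pathprojection}. It says that the projection $p_G(P):u_1,x_1,g_2,\dots,g_{n-1},v_1$ is a shortest $u_1,v_1$-path in $G$. Its proof also shows that this projection has length exactly $n=d_G(u_1,v_1)$ and contains no repeated vertices. Hence $x_1\neq u_1$, and $u_1x_1\in E(G)$ because consecutive vertices of the projection differ in distance by at most $1$ and are distinct. Since $x_1$ is the second vertex on a shortest $u_1,v_1$-path, $x_1\in I_G[u_1,v_1]\cap N(u_1)$, that is, $x_1\in S_G(u_1,v_1)$.

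For the first conclusion, I would lift $p_G(P)$ into the $h$-layer. The $h$-layer is an isomorphic and isometric copy of $G$, so the image $(u_1,h),(x_1,h),(g_2,h),\dots,(v_1,h)$ is a path of length $n$ in $G\boxtimes H$. Its length equals $d_{G\boxtimes H}((u_1,h),(v_1,h))=\max\{d_G(u_1,v_1),0\}=n$ by \eqref{eq:distance-strong}, so it is a shortest path. Its second vertex $(x_1,h)$ is adjacent to $(u_1,h)$ and lies on this shortest path, so $(x_1,h)\in S((u_1,h),(v_1,h))$.

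The only delicate point is ensuring $x_1\neq u_1$. The hypothesis $(x_1,x_2)\neq(u_1,h)$ does not rule out $x_1=u_1$ by itself, since the step could be purely in the $H$-coordinate. This is exactly the no-duplicate-vertices part of the proof of Lemma~\ref{lem:pathprojection}: the trail $p_G(P)$ has length $n=d_G(u_1,v_1)$, so it cannot repeat a vertex. I would state that consequence explicitly when invoking the lemma.
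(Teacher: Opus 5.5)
Your proof is correct and follows the paper's route: the paper states this corollary without proof as an immediate consequence of Lemma~\ref{lem:pathprojection}. You carry that out by extending the edge $(u_1,h)(x_1,x_2)$ to a shortest path, projecting it to $G$, and lifting the projection back into the isometric $h$-layer (the copy $P^*$ from the lemma's proof); your explicit check that $x_1\neq u_1$, because the projection is a path of length $n=d_G(u_1,v_1)$, supplies the one detail the paper leaves implicit.
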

Of course an analogous implication holds for the step system $S_H$ in $H$
if $(x_1,x_2)\in S((g,u_2),(g,v_2))$.

\begin{theorem}
  \label{thm:strong}
  If $G$ and $H$ are connected graphs, then $G\boxtimes H$ is smooth if and
  only if $G$ and $H$ are smooth graphs.
\end{theorem}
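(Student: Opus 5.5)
The converse direction is immediate. As noted before Lemma~\ref{lem:pathprojection}, the $G$- and $H$-layers of $G\boxtimes H$ are isometric subgraphs isomorphic to $G$ and $H$. So if $G\boxtimes H$ is smooth, Observation~\ref{ob:isometric} shows that $G$ and $H$ are smooth. All the work is in showing that smoothness of both factors implies smoothness of $G\boxtimes H$.

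For that direction I would work with the distance formulation of Observation~\ref{obs:dist}, restricted as in Observation~\ref{obs:interval} to $uv,wx\in E(G\boxtimes H)$, together with Eq.~\eqref{eq:distance-strong}. Write $u=(u_1,u_2)$, and similarly for $v,w,x,y$. Put $D=d(u,w)$, $D'=d(u,y)$, $E=d(u,x)$. Since $uv$ is an edge and $v\in I[u,w]$, we have $d(v,w)=D-1$. In max-metric terms this means two things:
\begin{itemize}
\item[(a)] every coordinate $i$ with $d_i(u_i,w_i)=D$ has $v_i\in S_i(u_i,w_i)$;
\item[(b)] every other coordinate has $d_i(v_i,w_i)\le D-1$, which holds automatically because $d_i(u_i,v_i)\le 1$.
\end{itemize}
The same holds for $y$ with $D'$. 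The goal $v\in I[u,x]$ amounts to $d_i(v_i,x_i)\le E-1$ for $i=1,2$. Coordinates with $d_i(u_i,x_i)\le E-2$ are harmless. So two kinds of coordinates remain: those with $d_i(u_i,x_i)=E$, where we need $v_i$ to step towards $x_i$, and those with $d_i(u_i,x_i)=E-1$, where we need $v_i$ not to step away from $x_i$.

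Next I would relate $E$ to $D$ and $D'$. From $d(w,x)=1$ we get $E\in\{D-1,D,D+1\}$. From $x\in I[w,y]$ and the triangle inequality through $u$, the case $E=D+1$ should be excluded. Indeed, $E=D+1$ would force $d(v,x)\ge D$, and combining $d(x,y)=d(w,y)-1$ with $d(v,y)=D'-1$ should give a contradiction via $d(w,y)\le d(w,v)+d(v,y)=D+D'-2$. In the remaining cases I would split according to which coordinate attains the maximum $E$. For a coordinate $i$ attaining $E$ in which also $d_i(u_i,w_i)=D$ and $d_i(u_i,y_i)=D'$, I would apply smoothness of the factor in the form \AX{Sm*} (Lemma~\ref{lem:interval2}) to $u_i,v_i,w_i,x_i,y_i$. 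This is where a genuine shortest path through $x_i$ in the factor is needed, and it is what Lemma~\ref{lem:pathprojection} and Corollary~\ref{cor:step} are designed to supply: projecting a shortest $w,y$-path through $x$ onto the relevant factor, after reducing to a layer, gives $x_i\in S_i(w_i,y_i)$. A good preliminary reduction is therefore to assume that $w$ and $y$ agree in the other coordinate, or at least that the coordinate $i$ realises $d(w,y)$. I would first try to achieve this by replacing $y$ with a suitable vertex on a shortest $w,y$-path, using the iteration argument from the proof of Lemma~\ref{lem:interval2}.

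I expect the main obstacle to be the mixed coordinate cases. In these the maximum for $d(u,x)$ is attained in a coordinate that does not attain the maximum for $d(u,w)$ or $d(u,y)$. Alternatively, $d_i(u_i,x_i)=E-1$ while $v_i$ moves away from $x_i$. Here smoothness of the factor cannot be invoked directly. Instead I would show, using only the triangle inequality in that factor together with $d_i(v_i,w_i)\le D-1$ and $d_i(v_i,y_i)\le D'-1$, that $d_i(v_i,x_i)\le E-1$. If this fails, my fallback is Lemma~\ref{lem:U(a,b)-convex}: prove directly that $W_{vu}=U(v,u)$ is convex in $G\boxtimes H$. To do so I would describe $W_{vu}$ coordinatewise through the sets $U_i(v_i,u_i)$, which are convex in the factors, and the max-metric, and then check convexity of intervals in the strong product by projecting shortest paths as in Lemma~\ref{lem:pathprojection}.
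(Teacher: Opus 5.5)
Your \emph{only if} direction is correct and is the same as the paper's: the layers are isometric copies of $G$ and $H$, so Observation~\ref{ob:isometric} applies. The other direction cannot be completed, because it is false. Let $G=K_2$ on $\{0,1\}$ and let $H=K_{1,3}$ with centre $c$ and leaves $a,b,e$. Both have at most four vertices, so both are smooth by Lemma~\ref{lem:5only}. In $G\boxtimes H$ put $u=(0,a)$, $v=(0,c)$, $w=(0,b)$, $x=(1,c)$, $y=(0,e)$. Then $uv$ and $wx$ are edges, $d(u,w)=d(u,y)=d(w,y)=2$, and $d(v,w)=d(v,y)=d(w,x)=d(x,y)=1$. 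So the precondition of \AX{Sm} holds. However, $d(u,x)=\max\{1,1\}=1<2=d(u,v)+d(v,x)$. In fact these five vertices induce a $K_{1,1,3}$, which has diameter $2$ and is therefore isometric.

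This is exactly your mixed-coordinate case. Here $D=D'=2$ and $E=1$, and $E$ is attained in the $K_2$-coordinate, where $v_1=u_1$ although $x_1\neq u_1$. The triangle inequality only gives $d_1(v_1,x_1)\le d_1(v_1,w_1)+1=1$, not $d_1(v_1,x_1)\le E-1=0$, and no argument can do better. Your preliminary reduction does not help either. Here $w$ and $y$ already agree in the first coordinate, and \AX{Sm*} in $H$ only returns the true but useless $c\in I_H[a,c]$. The fallback fails as well: $w,y\in W_{vu}$ but $x\in I[w,y]\setminus W_{vu}$, in accordance with Lemma~\ref{lem:U(a,b)-convex}.

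The mechanism is that a diagonal edge $wx$ can shorten the distance to $u$ in one coordinate while lengthening it in a coordinate where $v$ made no progress. Since intervals of $G\boxtimes H$ are not products of factor intervals, smoothness of the factors gives no control over this. The paper's own proof has the same hole. It disposes of the case $u_1=v_1$, $u_2v_2\in E(H)$ by citing the Cartesian argument, which rests on Lemma~\ref{interval-product} and does not transfer to $\boxtimes$. The example above lies precisely in that case.

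There are also two smaller slips. First, your item (b) is not automatic for coordinates with $d_i(u_i,w_i)=D-1$; it is a constraint forced by $v\in I[u,w]$. Second, $E=D+1$ is not excluded, since a path $u,v,w,x,y$ realises it. That case is harmless, however, because then $d(v,x)\le d(v,w)+1=E-1$.
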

\begin{proof}
  Let $G$ and $H$ be smooth graphs. Denote by $S$ the step system of
  $G\boxtimes H$ and consider a set $X\coloneqq \{ (u_1,u_2), (v_1,v_2),
  (y_1,y_2), (w_1,w_2), (x_1,x_2)\}\in V(G\boxtimes H)$ of five vertices
  such that $(v_1,v_2)\in S((u_1,u_2),(w_1,w_2))$, $(v_1,v_2)\in
  S((u_1,u_2),(y_1,y_2))$ and $(x_1,x_2)\in S((w_1,w_2),(y_1,y_2))$.
  By assumption we have $(u_1,u_2)(v_1,v_2)\in E(G\boxtimes H)$ and
  $(x_1,x_2)(w_1,w_2)\in E(G\boxtimes H)$.
  If $u_1=v_1$ and $u_2v_2\in E(H)$ or $u_1v_1\in E(G)$ and $u_2=v_2$, the
  same arguments as in the proof of Theorem~\ref{thm:smoothproduct} implies
  that the five vertices in $X$ satisfy \AX{Sm}.  In the
  following we therefore assume $u_1v_1\in E(G)$ and $u_2v_2\in E(H)$.
  We distinguish three cases: 
  \par\noindent\textbf{Case 1:}
  If $w_1x_1\in E(G)$ and $w_2=x_2$, then consider a shortest $(u_1,u_2),
  (w_1,w_2)$-path $P$ that contains $(v_1,v_2)$. Follow $P$ starting with
  $(u_1,u_2),(v_1,v_2)$ and stop right before reaching the $G$-layer that
  contains $(w_1,w_2)$, and let $(z_1,z_2)$ be the corresponding last
  vertex. Then either $(z_1,z_2)(x_1,x_2)$ is an edge, and the path
  finishes as a shortest $(u_1,u_2), (x_1,x_2)$-path or we follow $P$
  further until reaching $(w_1,w_2)$. Again, we can either reach
  $(x_1,x_2)$ a step earlier, or we continue with the edge
  $(w_1,w_2)(x_1,x_2)$, in either case obtaining a shortest $(u_1,u_2),
  (x_1,x_2)$-path that contains $(v_1,v_2)$.
  \par\noindent\textbf{Case 2:}
  If $w_2=x_2\in E(H)$ and $w_2x_2\in E(H)$ we can argue analogously.
  \par\noindent\textbf{Case 3:} If $w_1x_1\in E(G)$ and $w_2x_2\in E(H)$ we
  use $d_{G\boxtimes
    H}(u_1,u_2),(w_1,w_2)=\max\{d_G(u_1,w_1),d_H(u_2,w_2)\}$ and assume,
  without loss of generality, that the maximum is attained by
  $d_G(u_1,w_1)=\ell$ and thus the shortest $(u_1,u_2),(x_1,x_2)$-path
  through $(v_1,v_2)$ is also of length $\ell$.  Thus $(v_1,v_2)\in
  S((u_1,u_2),(x_1,x_2))$ and hence $X$ satisfies \AX{Sm}.  Therefore
  $G\boxtimes H$ is smooth.
  For the converse, it suffices to observe that $G$ and $H$ are isometric
  subgraphs of $G\boxtimes H$, and thus smooth whenever $G\boxtimes H$ is
  smooth. 
\end{proof}

The Helly graphs, also known as absolute retracts \cite{Hell:87}, are the
retracts of strong products of paths \cite{Nowakowski:83,Jawhari:86}, and
therefore smooth graphs.

In \cite[Theorem 2.2]{Imrich:92} Imrich and Klav{\v{z}}ar showed that retracts
of strong products are strong products of isometric subgraphs of the
factors. As an immediate consequence we have
\begin{corollary}
  If $G$ is smooth and a retract of $H_1\boxtimes H_2$ then $G=G_1\boxtimes
  G_2$, where both $G_1$ and $G_2$ are isometric subgraphs of $H_1$ and
  $H_2$, respectively.
\end{corollary}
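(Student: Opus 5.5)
The plan is to reduce the statement directly to the structural theorem of Imrich and Klav{\v{z}}ar \cite[Theorem 2.2]{Imrich:92} and then use Theorem~\ref{thm:strong} only to add the smoothness information. Smoothness plays no role in obtaining the product decomposition itself.

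First I check that the hypotheses of \cite[Theorem 2.2]{Imrich:92} are met. Assume, as throughout, that $H_1$ and $H_2$ are connected, so that $H_1\boxtimes H_2$ is connected. A retract $G$ of a connected graph is the image of an idempotent non-expansive map, hence connected. By the remark after the definition of retracts, $G$ is moreover an isometric subgraph of $H_1\boxtimes H_2$. Applying \cite[Theorem 2.2]{Imrich:92} to the retraction $\phi$ with image $G$ gives $G\cong G_1\boxtimes G_2$. Here $G_1$ and $G_2$ are isometric subgraphs of $H_1$ and $H_2$, respectively, obtained as the projections $p_{H_1}(V(G))$ and $p_{H_2}(V(G))$. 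This is exactly the claimed decomposition.

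Next I would use smoothness to sharpen the conclusion. Since $G=G_1\boxtimes G_2$ is smooth, the converse direction of Theorem~\ref{thm:strong} shows that $G_1$ and $G_2$ are smooth. Equivalently, $G_1$ and $G_2$ are isometric, being layers of $G$, so Observation~\ref{ob:isometric} applies. The same conclusion also follows from Observation~\ref{ob:isometric} via the fact that $G_i$ is isometric in $H_i$, provided the $H_i$ are smooth.

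The main obstacle is bookkeeping rather than mathematics. I need to make sure that the version of the Imrich--Klav{\v{z}}ar theorem being cited covers the setting here: possibly nontrivial $H_i$, finite graphs, and retractions that need not respect the product coordinates. I also need to make sure that ``isometric subgraph of $H_i$'' in their statement agrees with the paper's notion of an induced subgraph $H_i[W]$ with preserved distances. If their theorem is stated only for retracts onto subproducts, I would first verify directly that $\phi$ can be replaced by the map $(h_1,h_2)\mapsto(p_1\phi(h_1,h_2),\,p_2\phi(h_1,h_2))$. This verification uses Eq.~\eqref{eq:distance-strong} and the max-formula to show that this map is again non-expansive, and that its image equals $V(G)$.
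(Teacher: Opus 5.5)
You follow the same route as the paper. The paper gives no argument beyond calling the corollary an immediate consequence of \cite[Theorem 2.2]{Imrich:92}, and, like the paper, you correctly note that smoothness plays no role. The problem is the step ``applying \cite[Theorem 2.2]{Imrich:92} to $\phi$ gives $G\cong G_1\boxtimes G_2$''. It cannot be valid in this setting, and what you call bookkeeping is exactly where the argument breaks, in your version and in the paper's. With retractions in the paper's sense (idempotent maps with $d(\phi(x),\phi(y))\le d(x,y)$) and arbitrary connected factors, the statement is false, even with $G$, $H_1$, $H_2$ all smooth.

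Here is a counterexample. Take $H_1=H_2=P_3$ on $\{0,1,2\}$, so that $d((a,b),(c,d))=\max\{|a-c|,|b-d|\}$ by Eq.~\eqref{eq:distance-strong}.
\begin{itemize}
\item The set $S=\{(1,1),(0,0),(0,2),(2,1)\}$ induces a $K_{1,3}$ centred at $(1,1)$, because the three leaves are pairwise at distance $2$.
\item The map that fixes $S$ and sends the other five vertices to $(1,1)$ is idempotent and non-expansive, since $(1,1)$ is within distance $1$ of every vertex of $S$.
\item So $K_{1,3}$ is a retract of $P_3\boxtimes P_3$, and it is smooth by Lemma~\ref{lem:5only}, having only four vertices.
\item However, the isometric subgraphs of $P_3$ are $K_1$, $K_2$ and $P_3$. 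The only four-vertex strong product of two of them is $K_2\boxtimes K_2=K_4\not\cong K_{1,3}$.
\end{itemize}
Whatever the precise hypotheses of the Imrich--Klav{\v{z}}ar theorem are, they must exclude this example. They would have to be stated and verified, and smoothness does not supply them. Without such extra hypotheses, no proof of the statement as written is possible.

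Separately, your description of the factors is wrong even when a decomposition does exist. $G$ need not equal the subproduct $p_{H_1}(V(G))\times p_{H_2}(V(G))$ of its projections. For example, the diagonal $\{(i,i):\ i=0,1,2\}$ is a retract of $P_3\boxtimes P_3$ via $(i,j)\mapsto(i,i)$ and is isomorphic to $P_3\boxtimes K_1$. Yet both of its projections are all of $P_3$, and their product is the whole of $P_3\boxtimes P_3$. So at best one obtains an isomorphism, with $G_1,G_2$ not given by projections.

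Your fallback map $(h_1,h_2)\mapsto(p_1\phi(h_1,h_2),p_2\phi(h_1,h_2))$ is literally $\phi$, so it changes nothing. Your closing remarks are correct but are not part of what is to be proved: the layers of $G_1\boxtimes G_2$ are isometric, so $G_1,G_2$ are smooth by Observation~\ref{ob:isometric}.
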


\subsection{Lexicographic Product}

The \textit{lexicographic product} of graphs $G$ and $H$ is the graph $G
\circ H$ with vertex set $V (G) \times V (H)$.  We have
$(g,h)(g_0,h_0)\in E(G\circ H)$ if $gg_0\in E(G)$, or $g=g_0$ and
$hh_0 \in E(H)$. In contrast to the Cartesian and strong products, the
lexicographic product is not commutative. Indeed the two factors contribute
very differently to the distance in the product:
\begin{equation} \label{eq:distance-lexico}
  d_{G\circ H}((g,h),(g_0,h_0))=
  \begin{cases}
    d_G(g,g_0)         & \text{if } g\neq g_0\\
    \min(2,d_H(h,h_0)) & \text{if } g = g_0              
  \end{cases}
\end{equation}

\begin{figure}[h!]
  \centering
  \begin{tikzpicture}[every node/.style={circle, draw, fill=white, inner sep=1.5pt}]
    \node (u) at (1,2) {$u$};
    \node (v) at (1,0) {$v$};
    \node (w) at (1,-2) {$w$};
    \node (a) at (2,3) {$a$};
    \node (b) at (4,3) {$b$}; 
    \node (c) at (6,3) {$c$};
    \draw (u) -- (v) --(w); 
    \draw (a) -- (b) -- (c);
    \node (ua) at (2,2) {$~$};
    \node (ub) at (4,2) {$~$};
    \node (uc) at (6,2) {$~$};
    \draw (ua) -- (ub) -- (uc);
    \node (va) at (2,0) {$~$};
    \node (vb) at (4,0) {$~$};
    \node (vc) at (6,0) {$~$};
    \draw (va) -- (vb) -- (vc);
    \node (wa) at (2,-2) {$~$};
    \node (wb) at (4,-2) {$~$};
    \node (wc) at (6,-2) {$~$};
    \draw (wa) -- (wb) -- (wc);
    \draw (ua) -- (va);
    \draw (ua) -- (vb);
    \draw (ua) -- (vc);	
    \draw (ub) -- (va);
    \draw (ub) -- (vb);
    \draw (ub) -- (vc);	
    \draw[magenta, very thick] (uc) -- (va);
    \draw (uc) -- (vb);
    \draw [magenta, very thick](uc) -- (vc);	
    \draw[magenta, very thick] (va) -- (wa);
    \draw (va) -- (wb);
    \draw[magenta, very thick] (va) -- (wc);	
    \draw (vb) -- (wa);
    \draw (vb) -- (wb);
    \draw (vb) -- (wc);	
    \draw[magenta, very thick] (vc) -- (wa);
    \draw (vc) -- (wb);
    \draw[magenta, very thick] (vc) -- (wc);	
  \end{tikzpicture}
  \caption{The lexicographic product $P_3\circ P_3$ contains a
    $K_{2,3}$ as an induced subgraph.}
  \label{fig:not-smooth-lexico}
\end{figure}
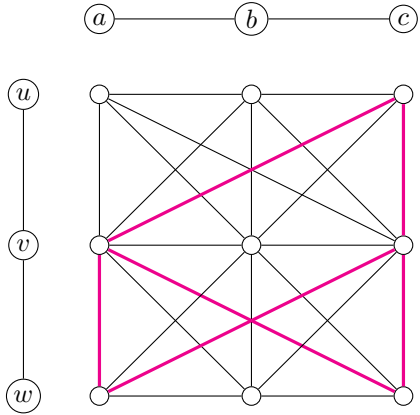

In particular, we have $d_{G\circ H}((g,h),(g,h_0))=1$ if $hh_0\in E(H)$
and $d_{G\circ H}((g,h),(g,h_0))=1$ for two distinct non-adjacent vertices
$h,h_0\in V(H)$. As in the Cartesian and strong product, layers form
induced subgraphs. However, the $H$-layers are not isometric subgraphs.

Suppose both $G$ and $H$ contain induced paths $P_3$ of length two,
$P_G=(u,v,w)$ in $G$ and $P_H=(a,b,c)$. Then $P_G\circ P_H$ is by
definition of the lexicographic product an induced subgraph of $G\circ
H$. By Equ.(\ref{eq:distance-lexico}) $P_G\circ P_H$ has diameter $2$ and
thus an isometric subgraph of $G\circ H$, see
Fig.~\ref{fig:not-smooth-lexico}.  Since $P_3\circ P_3$ contains a
$K_{2,3}$ as an induced subgraph it is not smooth. Therefore lexicographic
products can be smooth only if one the factors has diameter $1$ and thus is
a complete graph (including a single edge). 

\section{Gated Amalgams}

A subset $W\subseteq V(G)$ is called \textit{gated} if for every vertex $u$
in $G$ there exists a vertex $u_W$ in $W$ such that $u_W$ lies on all
shortest $u,v$-paths for all $v\in W$
\cite{Goldman:70,dress1987gated}. Let $H=G[W]$ be the subgraph of $G$
induced by a gated set $W$.  If $W$ is a gated set in $G$ then
\begin{equation}
  d_{H}(u_W,v_W) \le d_G(u,v)
\end{equation}
If $u\in W$, then $u_W=u$, i.e., in this case $u$ its its own gate for $W$,
and hence $d_{G[W]}(u,v)=d_G(u,v)$ for $u,v\in W$.  The gate function
mapping $u$ to the corresponding gate $u_U$ of a gated subset $U$ in $G$ is
a weak retraction and, in particular, any subgraph $G[U]$ of $G$ induced by
a gated set $H$ is an isometric subgraph of $G$. Thus we have
\begin{observation}
  If $G$ is smooth and $W$ is a gated set in $G$ then $G[W]$ is smooth. 
\end{observation}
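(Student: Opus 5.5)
The plan is to reduce the statement directly to Observation~\ref{ob:isometric}. For that it suffices to show that $G[W]$ is an isometric subgraph of $G$ whenever $W$ is gated. Smoothness is then inherited through the distance formulation of Observation~\ref{obs:dist}, since the five-point condition for vertices of $W$ involves only distances between vertices of $W$.

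\textbf{Isometry.} Let $a,b\in W$ and let $P$ be any shortest $a,b$-path in $G$. I claim that every vertex $z$ of $P$ lies in $W$.

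Let $z_W$ be the gate of $z$. By definition $z_W$ lies on all shortest $z,a$-paths and on all shortest $z,b$-paths. Hence
\begin{equation*}
  d(z,a)=d(z,z_W)+d(z_W,a) \quad\text{and}\quad d(z,b)=d(z,z_W)+d(z_W,b).
\end{equation*}
Adding these and using $z\in I[a,b]$ gives
\begin{equation*}
  d(a,b)=d(a,z)+d(z,b)=2d(z,z_W)+d(a,z_W)+d(z_W,b)\ge 2d(z,z_W)+d(a,b)
\end{equation*}
by the triangle inequality. Therefore $d(z,z_W)=0$, so $z=z_W\in W$.

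Consequently every shortest $a,b$-path of $G$ lies entirely in $G[W]$. This gives $d_{G[W]}(a,b)\le d_G(a,b)$. The reverse inequality holds trivially for any subgraph, so $d_{G[W]}(a,b)=d_G(a,b)$. In particular $G[W]$ is connected and, in fact, convex in $G$.

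\textbf{Conclusion.} With isometry established, Observation~\ref{ob:isometric} applies verbatim and $G[W]$ is smooth. Alternatively, one can check the condition of Observation~\ref{obs:dist} for five vertices $u,v,w,x,y\in W$. Its hypotheses and conclusion are equalities among $d_{G[W]}$-distances, which coincide with the $d_G$-distances, so they follow from the smoothness of $G$.

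\textbf{Main obstacle.} The only step needing care is the isometry argument, specifically the use of the gate of an interior vertex $z$ of the path. One must ensure that the gate property is applied for both endpoints $a$ and $b$ simultaneously, so that the ``detour'' through $z_W$ is counted twice and the triangle inequality forces $z=z_W$. Everything else is a direct appeal to earlier results.
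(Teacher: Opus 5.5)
Your proof is correct and follows the paper's route: first show that $G[W]$ is an isometric subgraph of $G$, then invoke Observation~\ref{ob:isometric}. The only difference is how isometry is justified: the paper appeals to the gate map being a (weak) retraction onto $W$, whereas you prove the slightly stronger and self-contained fact that gated sets are geodesically convex, by counting the detour to $z_W$ twice.
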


A graph $G$ is the \emph{gated amalgam} of $G_1$ and $G_2$ if $G$ contains
two gated sets $W_1,W_2\subset V(G)$ such that $W_1\cap W_2\ne\emptyset$,
$W_1\cup W_2=V(G)$, $G_1\simeq G[W_1]$ and $G_2\simeq G[W_2]$. In the
following we will simply identify $G_i$ and $G[W_i]$. 

If $W_1\cap W_2=\{x\}$ and $x$ is a cut vertex, then both $W_1$ and $W_2$
are gated, since $x=u_{W_1}$ for all $u\in W_2$ and $x=u_{W_2}$ for all
$u\in W_1$. Thus gluing two graph together a single (cut) vertex is a
special case of a gated amalgamation.
Proposition~3 in~\cite{nebesky2004properties} states that a 
graph is smooth if and only if each of its blocks is smooth. This result
can be stated equivalently as follows:
\begin{corollary}
  If $G$ is a graph obtained by gluing together two subgraphs $G_1$ and
  $G_2$ at a single cut vertex, then $G$ is smooth if and only if $G_1$ and
  $G_2$ are smooth.
\end{corollary}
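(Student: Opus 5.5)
One direction is immediate from what is already established. Suppose $G$ is obtained by gluing $G_1$ and $G_2$ at a cut vertex $c$. Then $W_1=V(G_1)$ and $W_2=V(G_2)$ are gated sets, as observed just before the statement. Gated sets induce isometric subgraphs, so if $G$ is smooth then $G_1$ and $G_2$ are smooth by Observation~\ref{ob:isometric}.

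For the converse I would work with the distance formulation of Observation~\ref{obs:dist} and with Lemma~\ref{lem:interval2}, so that \AX{Sm*} may be used freely. The only metric facts needed are:
\begin{itemize}
\item distances inside each $W_i$ agree with those of $G_i$;
\item for $a\in W_1$ and $b\in W_2$ we have $d(a,b)=d(a,c)+d(c,b)$.
\end{itemize}
Consequently, for $a\in W_1$ and $b\in W_2$ the interval satisfies $I[a,b]=I_{G_1}[a,c]\cup I_{G_2}[c,b]$.

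Fix five vertices with $uv,wx\in E(G)$, $v\in I[u,w]\cap I[u,y]$ and $x\in I[w,y]$. By Lemma~\ref{lem:5only} they may be assumed pairwise distinct. Because $uv$ is an edge, $u$ and $v$ lie in a common $W_i$; say $u,v\in W_1$. I would then split into cases according to where $w$ and $y$ lie.

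\textbf{Case 1: $w,y\in W_1$.} Then $I[w,y]\subseteq W_1$, so $x\in W_1$. All five vertices lie in $W_1$, and smoothness of $G_1$ gives $v\in I[u,x]$.

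\textbf{Case 2: $w,y\in W_2\setminus W_1$.} Here $v\in I[u,w]$ forces $v\in I_{G_1}[u,c]$. The vertex $x$ lies in $W_2$ because $I[w,y]\subseteq W_2$. Hence $d(u,x)=d(u,c)+d(c,x)$, and since $v\in I[u,c]$, betweenness gives $v\in I[u,x]$.

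\textbf{Case 3: exactly one of $w,y$ lies in $W_1$.} By symmetry of \AX{Sm*} in $w$ and $y$, I would take $w\in W_1$ and $y\in W_2\setminus W_1$. Then $v\in I[u,y]$ gives $v\in I[u,c]$, and $I[w,y]=I[w,c]\cup I[c,y]$.
\begin{itemize}
\item If $x\in I[w,c]\subseteq W_1$, apply \AX{Sm*} in $G_1$ to the five vertices $u,v,w,x$ and $c$ (with $c$ in the role of $y$). This is legitimate because $v\in I[u,w]\cap I[u,c]$ and $x\in I[w,c]$.
\item If $x\in I[c,y]\subseteq W_2$, argue exactly as in Case 2.
\end{itemize}
Because \AX{Sm*} is symmetric in $w$ and $y$, the case with $w$ and $y$ swapped is covered as well.

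The main obstacle is bookkeeping rather than substance. First, vertices can sit on the boundary: $c$ itself lies in both $W_1$ and $W_2$. Second, replacing $y$ by $c$ in Case 3 requires \AX{Sm*} rather than \AX{Sm}, because $c$ need not be adjacent to anything relevant. Lemma~\ref{lem:interval2} guarantees exactly that $G_1$ satisfies \AX{Sm*}, so this step is justified.
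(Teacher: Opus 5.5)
Your proof is correct, but it takes a different route from the paper. The paper gives no direct argument for this corollary. It presents it as a reformulation of Nebesk{\'y}'s result that a graph is smooth if and only if each of its blocks is smooth. It then recovers it as the special case $W_1\cap W_2=\{c\}$ of Theorem~\ref{gated-amalgam-smooth}, with the converse coming, as in your first paragraph, from gated sets inducing isometric subgraphs.

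Your argument is self-contained. After placing $u,v\in W_1$ you split only on the location of $w$ and $y$, and everything follows from $I[a,b]=I_{G_1}[a,c]\cup I_{G_2}[c,b]$ for $a\in W_1$, $b\in W_2$. Replacing $y$ by $c$ is exactly the cut-vertex instance of the gate replacement $y\mapsto y'$ in Case~2 of the amalgam proof. Because every gate is $c$, the harder steps there (comparing the gates of the adjacent vertices $u,v$, or replacing both $w$ and $x$ by their gates) collapse into the single betweenness step of your Case~2. The paper's route buys generality: arbitrary gated amalgams, and by iteration the block decomposition. Yours buys a short, checkable proof that needs neither the external reference nor the amalgam case analysis.

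Two small bookkeeping points:
\begin{enumerate}
\item You fixed $wx\in E(G)$, so your hypotheses are not literally symmetric in $w$ and $y$. The reduction in Case~3 is valid only because your Case~3 argument never uses that edge. Say so, or prove the statement for arbitrary $x\in I[w,y]$.
\item The substitution $y\mapsto c$ does not actually require \AX{Sm*}. In \AX{Sm} the vertex $y$ carries no adjacency condition, and $uv,wx$ remain edges of $G_1$. In the swapped orientation ($y\in W_1$, $w\in W_2\setminus W_1$), the subcase $x\in I[c,y]$ is trivial: $x$ is adjacent to $w$, which forces $x=c$.
\end{enumerate}
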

This begs the question whether this result remains true for gated
amalgams in general. The following theorem gives an affirmative answer.

\begin{theorem}\label{gated-amalgam-smooth}
The gated amalgam of two smooth graphs is a smooth graph. 
\end{theorem}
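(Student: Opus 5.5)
The plan is to use Lemma~\ref{lem:U(a,b)-convex}: $G$ is smooth if and only if $U(v,u)=W_{vu}$ is geodesically convex for every edge $uv$. Let $G$ be the gated amalgam of $G_1=G[W_1]$ and $G_2=G[W_2]$, and put $W_0=W_1\cap W_2$. Since gated sets are convex, $W_0$ is gated and convex as well.

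\emph{Step 1: every edge of $G$ lies inside $W_1$ or inside $W_2$.} Suppose $a\in W_1\setminus W_2$ and $b\in W_2\setminus W_1$ are adjacent. The gate of $a$ in $W_2$ lies on the $a,b$-geodesic $a,b$ and is not $a$, so it equals $b$. Symmetrically, the gate of $b$ in $W_1$ is $a$. Pick any $c\in W_0$. The first fact gives $d(a,c)=1+d(b,c)$ and the second gives $d(b,c)=1+d(a,c)$, which is a contradiction. By symmetry it therefore suffices to fix an edge $uv$ with $u,v\in W_1$.

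\emph{Step 2: describe $W_{vu}$.} For $x\in V(G)$ write $x'=x_{W_1}$ for its gate in $W_1$. Then $d(x,z)=d(x,x')+d(x',z)$ for all $z\in W_1$. Taking $z=u,v$ shows
\[
W_{vu}^G=\{x\in V(G):\, x'\in W_{vu}^{G_1}\}.
\]
Here $W_{vu}^{G_1}$ is convex in $G_1$ by smoothness of $G_1$ and Lemma~\ref{lem:U(a,b)-convex}. Since $G_1$ is isometric in $G$, it is in fact convex in $G$ as a subset of the convex set $W_1$.

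\emph{Step 3: convexity of $W_{vu}^G$.} Take $a,b\in W_{vu}^G$ and $c\in I[a,b]$; we must show $c'\in W_{vu}^{G_1}$. The key auxiliary claim is that the gate map is interval-preserving in the amalgam:
\[
c\in I[a,b]\ \Longrightarrow\ c'\in I[a',b'].
\]
Given the claim, convexity of $W_{vu}^{G_1}$ together with $a',b'\in W_{vu}^{G_1}$ finishes the proof.

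I would prove the claim by cases on where $a$ and $b$ lie.
\begin{itemize}
\item If $a,b\in W_1$, then $c\in W_1$ by convexity, and $c=c'$.
\item If $a\in W_1$ and $b\notin W_1$, then $b\in W_2\setminus W_1$, $b'\in W_0$, and $d(a,b)=d(a,b')+d(b',b)$. A geodesic through $c$ can be rerouted through $b'$. If $c\in W_1$, this gives $c\in I[a,b']$. If $c\notin W_1$, one checks $c'\in I[a,b']$ using $d(a,c)=d(a,c')+d(c',c)$ and the triangle inequality.
\item If $a,b\notin W_1$, then $a,b\in W_2$. Inside $G_2$ the gate to $W_0$ coincides with the gate to $W_1$, because $W_0$ is the gate set of $W_1$ seen from $W_2$. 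This reduces the case to the analogous statement for the gated set $W_0$ and the vertices $a,b,c$, where $c$ may leave $W_2$. Such a $c$ lies in $W_1$, so $c=c'$, and the geodesic passes through $W_0$ twice; the subpath between these passes is then controlled by the gates $a',b'$.
\end{itemize}

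I expect this gate-projection claim, and in particular the last case where the geodesic from $a$ to $b$ dips into $W_1\setminus W_2$, to be the main obstacle. The first thing I would try is the nonexpansiveness of the gate map, $d(a',b')\le d(a,b)$, combined with the identities $d(a,c)=d(a,a')+d(a',c)$ for $c\in W_1$. These should force $d(a',c')+d(c',b')=d(a',b')$. If that resists, my fallback is to verify \AX{Sm} directly for five vertices via Observation~\ref{obs:dist}, distributing them between $W_1$ and $W_2$ and replacing vertices on the far side by their gates.
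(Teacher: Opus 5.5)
\textbf{There is a genuine gap in Step 3.} Your Steps 1 and 2 are sound, and so are the cases $a,b\in W_1$ and $a\in W_1$, $b\notin W_1$ of Step 3. In the mixed case, $c'\in I[a,c]$ together with $c\in I[a,b]$ gives $c\in I[c',b]$, and then $c'\in I[a,b']$ follows.

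The gap is the case $a,b\in W_2\setminus W_1$, and you have misidentified its difficulty. Since $W_2$ is gated, it is convex, so $c\in I[a,b]\subseteq W_2$; the geodesic never enters $W_1\setminus W_2$. The real issue is $c\in W_2$. What you need there is exactly that the gate map of $G_2$ onto $W_0$ sends $I[a,b]$ into $I[a',b']$. That is the ``analogous statement'' you defer to, so the reduction is circular.

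Moreover, this statement is false for gated sets in general. Take $K_{2,3}$ with sides $\{p_1,p_2\}$ and $\{q_1,q_2,q_3\}$. The edge $\{p_1,q_1\}$ is gated, since no vertex is equidistant from its ends. Both $q_2$ and $q_3$ have gate $p_1$, but $p_2\in I[q_2,q_3]$ has gate $q_1\notin I[p_1,p_1]$. Attaching a pendant vertex $t$ at $p_1$ turns this into a gated amalgam of $K_{2,3}$ and the path $t\,p_1\,q_1$ along $W_0=\{p_1,q_1\}$. So your claim fails in amalgams. Every tool you propose (gate identities, nonexpansiveness, the triangle inequality) holds in this example, so none of them can force $d(a',c')+d(c',b')=d(a',b')$. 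Smoothness of $G_2$ must be used, and your plan never uses it.

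\textbf{A repair.} The fix is short and bypasses the claim. Let $u^*,v^*$ be the gates of $u,v$ in $W_2$; by nonexpansiveness they are equal or adjacent.
\begin{itemize}
\item If $u^*=v^*$, then $d(z,v)-d(z,u)=d(u^*,v)-d(u^*,u)$ is constant for $z\in W_2$. Hence $W_{vu}^G\cap W_2$ is either $\emptyset$ or $W_2$.
\item If $u^*v^*$ is an edge, compare $d(u,v^*)=d(u,u^*)+1\le 1+d(v,v^*)$ with its symmetric counterpart to get $d(u,u^*)=d(v,v^*)$. Hence $W_{vu}^G\cap W_2=W_{v^*u^*}^{G_2}$, which is convex by smoothness of $G_2$ and Lemma~\ref{lem:U(a,b)-convex}.
\end{itemize}
In either case, $a,b\in W_{vu}^G\cap W_2$ and $c\in I[a,b]\subseteq W_2$ give $c\in W_{vu}^G$.

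With this substitution your halfspace argument becomes a complete proof. It is a genuinely different route from the paper's, which instead does a case analysis on where the five vertices $u,v,w,x,y$ lie and replaces far-side vertices by their gates (essentially your fallback).

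A minor point: $W_0$ being gated does not follow from convexity. It does hold here, because the $W_1$-gate of a vertex of $W_2$ lies in $W_2$ by convexity, but after the repair you no longer need it.
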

\begin{proof}
  Let $G$ be a gated amalgam of two smooth graphs $G_1=G[W_1]$ and
  $G_2=G[W_2]$ and assume that $G$ is not smooth. Thus there is a set
  $X\coloneqq \{u,v,w,x,y\}\subset V(G)$ of five vertices that serve as a
  counterexample, i.e., that violates \AX{Sm}.  Since $W_1$ and $W_2$ are
  gated, and thus $G[W_1]$ and $G[W_2]$ are isometric subgraphs of $G$, we
  conclude that the counterexample $X$ cannot by contained entirely in
  $W_1$ or in $W_2$.  Recall that the counterexample $X$ has the following
  structure: (i) $uv$ and $wx$ are edges, there is a shortest $u,w$-path
  and $u,y$-path passing through $v$, and a shortest $w,y$-path through
  $x$, but there is no shortest $u,x$-path through $v$.

  Consider $X\cap (W_1\setminus W_2)\ne\emptyset$.  We proceed case-by-case
  we show show that no such counterexample can exist.  First note that
    we may assume that that $X\cap (W_2\setminus W_1)\ne\emptyset$, since
    otherwise $X\subseteq W_1$. Moreover, the absence edges of between
  $W_1\setminus W_2$ and $W_2\setminus W_1$ immediately exclude the
  following sitatuations: \par\noindent\textbf{(a)} $v\in W_1\setminus W_2$
  and $u\in W_2\setminus W_1$.  \par\noindent\textbf{(b)} $w\in
  W_1\setminus W_2$ and $x\in W_2\setminus W_1$ \par\noindent\textbf{(c)}
  If $p\in X\setminus\{v\}$ is the only vertex of $X$ in $W_1\setminus
  W_2$, then every shortest path from $p$ must also run through its gate
  $p_2\in W_2$, and thus we obtain an alternative counterexample
  $X'\coloneqq (X\setminus\{p\})\cup\{p'\}$. However, $X'\subseteq W_2$ and
  thus cannot be a counterexample. Together with \textbf{(a)}, this implies
  that at least two vertices are located in $W_1\setminus W_2$.
  
  Considering \textbf{(a)}, \textbf{(b)}, and \textbf{(c)}, and noting that,
  by symmetry, we may assume without losing generality
  that $u\in W_1$, the following cases remain: 

  \par\noindent\textbf{Case 1:}
  $u,v\in W_1\setminus W_2$ and $w,x,y\in
  W_2$.\\
  Denote by $z_u$ and $z_v$ be the gates of $u$ and $v$ in $G_2$.  Since
  $z_u$ and $z_v$ are the gates (and hence image of the retraction) of
  adjacent vertices we have either $z_u=z_v$ or $z_u$ and $z_v$ are
  adjacent. We assume, for contradiction that $z_u\ne z_v$.  Uniqueness of
  the gate implies that all shortest $u,w$-path and all shortest
  $u,y$-paths pass through the gate of $u$, i.e., $z_u$. Since $v\in
  S(u,w)$, there exists a shortest $u,w$-path $P$ passing through $v$, and
  thus $z_u\in P$ is the first vertex in $W_2$ that $P$
  hits. However, as the $v,w$-subpath of $P$ is a shortest $v,w$-path, the
  first vertex in $W_2$ hit by $P$ is $z_v$. Therefore $z_u=z_v$
  and every shortest $u,x$-path passes through $z_u$.  Thus $v$ lies
  between $u$ and $z_u$ along a shortest $u,w$-path, and hence also along a
  shortest $u,x$-path.
  \par\noindent\textbf{Case 2:}  $u,v,w,x\in W_1\setminus W_2$ and
  $y\in W_2$.
  \\
  Let $y'$ be the gate of $y$ in $G_1$.  Consider the vertices
  $u,v,w,x,y'$. All these vertices lies in $G_1$.  Since $y'$ is the gate
  of $y$ in $G_2$, $y'$ lies on a shortest path between $y$ and any vertex
  in $X\setminus\{y\}$.  Hence, there exists a shortest $u,y'$-path passing
  through $v$ and a shortest $w,y'$-path through $x$, since $uv$ and $wx$
  are edges. These vertices satisfies the pre-conditions of smoothness,
  namely, $v$ is on a $u,w$-shortest path and on a $u,y'$ shortest path and
  $x$ is on a shortest $w,y'$-path. Thus assuming that $v$ is not on a
  shortest $u,x$-path in $G$ implies that $G_1$ is not smooth,
  contradicting our assumptions.
  \par\noindent\textbf{Case 3:} $u,v,y\in W_1\setminus W_2$ and
  $w,x\in W_2$.
  \\
  Since $x$ lies on a shortest $w,y$-path, we infer in a similar way as in
  Case 1 that the gate of $x$ in $W_1$ is the same as the gate of $w$ in
  $W_1$; denote it by $z$. Consider a shortest $u,w$-path $P$ containing
  $v$. Clearly, $P$ contains also $z$, and in turn it contains $x$. Thus
  there is a subpath of $P$, which is a shortest $u,x$-path that contains
  $v$.
  Consider the gates $w'$ of $w$ and $x'$ of $x$ in $G_1$. Since $wx$ is an
  edge in $G_2$, $w'$ is distinct from $x'$ and thus $w'x'$ is be an edge
  in $G_1$ since the mapping of vertices to their gates is a retraction.
  Now consider $X'\coloneqq\{u,v,y,w',x'\}$. We have $X'\subseteq W_1$.
  Similar to Case 2 we see that the $X'$ satisfies the pre-conditions of
  \AX{Sm} but violate the implication, contradicting the assumption that
  $G_1$ is smooth.
  The final possibility is $x,w\in W_1\setminus W_2$, for which we have
  already ruled out all possibilities to form a counterexample: if $u,v\in
  W_1\setminus W_2$, we obtain a subcase of Case 2; if $u,v\in W_2\setminus
  W_1$, we obtain a subcase of Case 1; using \textbf{(a)} and \textbf{(c)},
  only $u,v\in W_1\cap W_2$ and thus $u,v,w,x\in W_1$.  Moreover, by
  \textbf{(c)}, $y\in W_2\setminus W_1$ is impossible and hence $y\in W_1$,
  and thus $X\subseteq W_1$.  Summarizing \textbf{(a)}, \textbf{(b)},
  \textbf{(c)} and the three Cases 1--3, there is no choice of $X$ that
  satisfies the precondition of \AX{Sm} but violates its implication.
\end{proof}

\section{Scaled Embeddings into Hypercubes}
\label{sec:l1}

\begin{definition}
  A connected graph $G$ with shortest path distance $d_G$ is
  \emph{embeddable with scale $\lambda\in\mathbb{N}$} into a graph $H$ with
  shortest path distance $d_H$ if there exists a mapping $\varphi: V(G)\to
  V(H)$ such that $d_{H}(\varphi(x),\varphi(y)) = \lambda d_G(x,y)$ for
  every pair of vertices $x,y\in V(G)$.
\end{definition}

\begin{theorem}\label{scale2}
  Let $G$ be a graph, and let $H$ be a smooth graph such that $G$ has a
  scale $2$ embedding into $H$. Then $G$ is also a smooth graph.
\end{theorem}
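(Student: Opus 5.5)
The plan is to work entirely with the metric reformulation of smoothness, that is, Observation~\ref{obs:dist} together with Lemma~\ref{lem:interval2}. These say that a connected graph is smooth if and only if \AX{Sm*} holds for \emph{arbitrary} five vertices, with no adjacency requirement on $uv$ or $wx$. This matters because the embedding $\varphi$ of scale $2$ does not send edges of $G$ to edges of $H$: it sends adjacent vertices to vertices at distance $2$. So we cannot transport \AX{Sm} directly, but we can transport \AX{Sm*}.

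The first step is to show that $\varphi$ preserves and reflects betweenness. For any $a,b,c\in V(G)$,
\[
d_G(a,b)+d_G(b,c)=d_G(a,c)\iff d_H(\varphi(a),\varphi(b))+d_H(\varphi(b),\varphi(c))=d_H(\varphi(a),\varphi(c)).
\]
This holds because every distance on the right is exactly $2$ times the corresponding distance on the left. Consequently $b\in I_G[a,c]$ if and only if $\varphi(b)\in I_H[\varphi(a),\varphi(c)]$. In passing, $\varphi$ is injective, since $d_G(x,y)>0$ forces $d_H(\varphi(x),\varphi(y))>0$, although injectivity is not really needed.

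The second step is the transfer of \AX{Sm*}. Take $u,v,w,x,y\in V(G)$ with $v\in I_G[u,w]\cap I_G[u,y]$ and $x\in I_G[w,y]$. By the first step, $\varphi(v)\in I_H[\varphi(u),\varphi(w)]\cap I_H[\varphi(u),\varphi(y)]$ and $\varphi(x)\in I_H[\varphi(w),\varphi(y)]$. Since $H$ is smooth, Lemma~\ref{lem:interval2} gives \AX{Sm*} in $H$, and hence $\varphi(v)\in I_H[\varphi(u),\varphi(x)]$. Applying the first step in the reverse direction yields $v\in I_G[u,x]$. Therefore $G$ satisfies \AX{Sm*} and, again by Lemma~\ref{lem:interval2}, $G$ is smooth.

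The only delicate point, which I regard as the main obstacle, is to resist checking \AX{Sm} edge by edge. The images of edges are not edges, so one must pass through the equivalence \AX{Sm}$\Leftrightarrow$\AX{Sm*} of Lemma~\ref{lem:interval2} in both graphs. Once that is done the argument uses nothing specific about the value $2$. I would therefore remark that the same proof works for any scale $\lambda\in\mathbb{N}$, and in particular covers graphs that embed with some scale into a hypercube, which are smooth by Theorem~\ref{thm:smoothproduct}.
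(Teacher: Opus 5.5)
Your proof is correct, and it takes a cleaner route than the paper's.

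\textbf{The paper's route.} The paper checks only the edge form \AX{Sm} in $G$. For edges $uv$ and $wx$ it uses $d_H(\varphi(u),\varphi(v))=d_H(\varphi(w),\varphi(x))=2$. It picks a common neighbor $v'$ of $\varphi(u),\varphi(v)$ and a common neighbor $z'$ of $\varphi(w),\varphi(x)$. It then applies the edge form of \AX{Sm} in $H$ four times: to $\varphi(u),v',\varphi(w),z',\varphi(y)$, then to $\varphi(u),v',z',\varphi(x),\varphi(y)$, then to $v',\varphi(v),\varphi(w),z',\varphi(y)$, and finally to $v',\varphi(v),z',\varphi(x),\varphi(y)$. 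It concludes by concatenating the edge $\varphi(u)v'$ with a shortest $v',\varphi(x)$-path through $\varphi(v)$. In effect this redoes by hand, for segments of length $2$, the path induction from the proof of Lemma~\ref{lem:interval2}.

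\textbf{Your route.} You invoke Lemma~\ref{lem:interval2} once in $H$. After that, \AX{Sm*} is a condition built only from linear equalities between distances. It is therefore preserved and reflected by any map that multiplies all distances by a fixed constant, and no auxiliary vertices are needed.

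\textbf{What each buys.} Your argument works for every scale $\lambda$, not just $\lambda=2$. Together with smoothness of hypercubes, it gives Theorem~\ref{thm:L1-smooth} directly, without Shpectorov's characterization, Corollary~\ref{cor:halfcube} or Lemma~\ref{lem:cocktail}. The paper's version needs only the local (edge) form of smoothness in $H$. Since Lemma~\ref{lem:interval2} is already established earlier in the paper, that advantage is essentially nil.
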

\begin{proof}
  As noted in Obs.~\ref{obs:dist}, the smoothness condition for $G$ and $H$
  can be expressed entirely in terms of the distances $d_G$ and $d_H$.
  Assume $u,v,w,x,y\in V(G)$ satisfy the precondition in
  Obs.~\ref{obs:dist}. Since by assumption there exists $\varphi:V(G)\to V(H)$
  such that $d_{H}(\varphi(u),\varphi(v))=2d_G(u,v)$, we have a shortest
  $\varphi(u),\varphi(w)$-path passing through $\varphi(v)$, a shortest
  $\varphi(u),\varphi(y)$-path passing through $\varphi(v)$ and a shortest
  $\varphi(w),\varphi(y)$-path passing through $\varphi(x)$.  Moreover, we
  have $d_H(\varphi(u),\varphi(v))= 2$ and $d_H(\varphi(w),\varphi(x))=2$.
  This situation in $H$ is depicted in Fig.~\ref{fig:placeholder}.  $G$ is
  smooth if these conditions imply $d_G(u,x)=d_G(u,v)+d_G(v,x)$, or
  equivalently $d_{H}(\varphi(u),\varphi(v))+ d_{H}(\varphi(v),\varphi(x))
  = d_{H}(\varphi(u),\varphi(x))$ in $H$.

  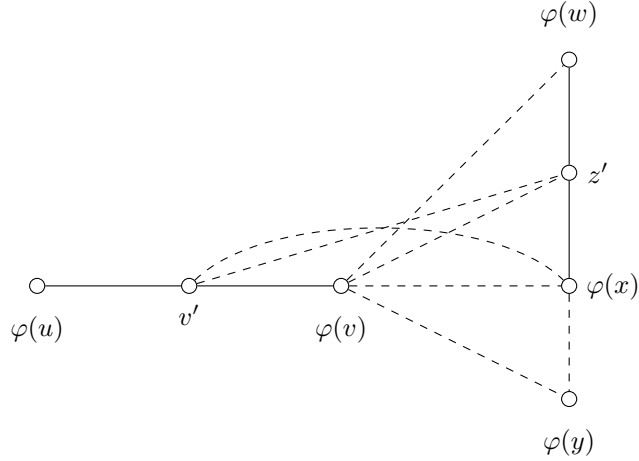
\begin{figure}[h]
    \centering
    \begin{tikzpicture}[every
        node/.style={circle, draw, fill=white, inner sep=1.5pt}]
      \node[draw,circle,inner sep=2pt,
        label=below:$\varphi(u)$] (v1) at (-4,0) {};
      \node[draw,circle,inner sep=2pt,
        label=below:$v'$] (v2) at (-2,0) {};
      \node[draw,circle,inner sep=2pt,
        label=below:$\varphi(v)$] (v3) at (0,0) {};
      \node[draw,circle,inner sep=2pt,
        label=above:$\varphi(w)$] (v4) at (3,3) {};
      
      \node[draw,circle,inner sep=2pt,
        label=right:$z'$] (v5) at (3,1.5) {};
      \node[draw,circle,inner sep=2pt,
        label=right:$\varphi(x)$] (v6) at (3,0) {};
      \node[draw,circle,inner sep=2pt,
        label=below:$\varphi(y)$] (v7) at (3,-1.5) {};
      \draw (v1) -- (v2);
      \draw (v2) -- (v3);
      \draw[dashed] (v3) -- (v4);
      \draw[dashed] (v3) -- (v5);
      \draw[dashed] (v3) -- (v6);
      \draw[dashed] (v3) -- (v7);
      \draw[dashed] (v2) -- (v5);
      \draw[dashed] (v2)  .. controls +(1,1) and +(-1,1) .. (v6);
      \draw (v4) -- (v5);
      \draw (v5) -- (v6);
      \draw[dashed] (v6) -- (v7);
    \end{tikzpicture}
    \caption{Mutual relationships of $\varphi(u)$, $\varphi(v)$,
      $\varphi(w)$, $\varphi(x)$, and $\varphi(y)$ in $H$ for $u,v,w,x,y\in
      V(G)$ forming the precondition for the smoothness condition of
      Obs.~\ref{obs:dist}.}
    \label{fig:placeholder}
  \end{figure}
  
  It therefore remains to show that the latter equality indeed holds. To
  this end, since $d_H(\varphi(u),\varphi(v))=2$, and $\varphi(v)$ is on
  both a shortest $\varphi(u),\varphi(w)$-path and a
  $\varphi(u),\varphi(y)$-path, there exists a vertex $v'$ adjacent to the
  vertices $\varphi(u)$ and $\varphi(v)$ and on both the shortest
  $\varphi(u),\varphi(w)$-path and a
  $\varphi(u),\varphi(y)$-path. Similarly, since
  $d_H(\varphi(w),\varphi(x))=2$, and $\varphi(x)$ is on a shortest
  $\varphi(w),\varphi(y)$-path, there exists a vertex $z'$ adjacent to
  $\varphi(w)$ and $\varphi(x)$ and on a shortest
  $\varphi(w),\varphi(y)$-path (see the Figure~\ref{fig:placeholder}).  Now
  consider the vertices $v',z'\in V(H)$ with $d_H(\varphi(u),v')=1$ and
  $d_H(\varphi(x),z')=1$.  Consider a shortest
  $\varphi(u),\varphi(w)$-path passing through $v'$ and $\varphi(v)$, a
  shortest $\varphi(u),\varphi(y)$-path passing through $v'$ and
  $\varphi(y)$ and a shortest $\varphi(w),\varphi(y)$-path passing
  through $z'$ and $\varphi(x)$.  Since $H$ is smooth, there is a shortest
  $\varphi(u),z'$-path passing through $v'$.

  Now consider a shortest $\varphi(u),z'$-path through $v'$, a shortest
  $\varphi(u),\varphi(y)$-path passing through $v'$ and $\varphi(v)$ and
  a shortest $z',\varphi(y)$-path passing through $\varphi(x)$. Invoking
  smoothness of $H$, there is a shortest $\varphi(u),\varphi(x)$-path $P$
  passing through $v'$.

  Considering the smoothness property for the five vertices $v'$,
  $\varphi(v)$, $\varphi(w)$, $z'$ and $\varphi(y)$, we conclude that there
  is a shortest $v',z'$ path passing through $\varphi(v)$. Applying the
  smoothness condition again to the vertices $v'$, $\varphi(v)$, $z'$,
  $\varphi(x)$, and $\varphi(y)$, we obtain a shortest $v',\varphi(x)$-path
  $P'$ passing through $\varphi(v)$. The path $P$ formed by the edge
  $v'\varphi(u)\in E(H)$ and $P'$ is a shortest $\varphi(u),\varphi(x)$-path through
  $\varphi(v)$, because $P$ is a shortest $\varphi(u),\varphi(x)$-path
  passing through $v'$. This implies the desired equality
  $d_{H}(\varphi(u),\varphi(v))+d_{H}(\varphi(v),\varphi(x))=
    d_{H}(\varphi(u),\varphi(x)))$.
\end{proof}

The half-cube graph $Q^n/2$ (see e.g.\ \cite{Imrich:98a}) is the square of
$Q^{n-1}$, i.e., the graph with $V(Q^n/2)=V(Q^{n-1})$ and $x,y\in E(Q^n/2)$
if and only if $1\le d_{Q^{n-1}}(x,y)\le 2$. The half-cube $Q^n/2$
therefore has an embedding with scale-$2$ into the hypercube $Q^n$. As an
immediate consequence of Theorem~\ref{scale2} and smoothness of $Q^n$, we
therefore have
\begin{corollary}
  \label{cor:halfcube}
  If $G$ is a half-cube graph then $G$ is smooth.
\end{corollary}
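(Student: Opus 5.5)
The plan is to apply Theorem~\ref{scale2} directly. I will exhibit a scale-$2$ embedding of an arbitrary half-cube graph $G=Q^n/2$ into the hypercube $Q^n$, and then use the smoothness of $Q^n$.

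\emph{Smoothness of $Q^n$.} This follows from Theorem~\ref{thm:smoothproduct}: $Q^n$ is the $n$-fold Cartesian product of $K_2$, and $K_2$ is trivially smooth. Equivalently, it follows from Proposition~\ref{p:partialcubes}, since $Q^n$ is a partial cube.

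\emph{The embedding.} I would use the standard description of the half-cube: the vertex set is the set of even-weight vectors in $\{0,1\}^n$, and two vertices are adjacent exactly when their Hamming distance is $2$. This is the same graph as the square of $Q^{n-1}$ described in the text. Concretely, extend each $x\in\{0,1\}^{n-1}$ by a parity bit $x_n=\sum_i x_i \bmod 2$. This map $\varphi$ is a bijection onto the even-weight vectors of $Q^n$. If $d_{Q^{n-1}}(x,y)=k$, then $d_{Q^n}(\varphi(x),\varphi(y))$ equals $k$ when $k$ is even and $k+1$ when $k$ is odd; in both cases it is $2\lceil k/2\rceil$.

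The main step is to check that
\begin{equation*}
d_{Q^n/2}(x,y)=\lceil d_{Q^{n-1}}(x,y)/2\rceil .
\end{equation*}
The upper bound holds because one can flip coordinates two at a time, using one final single flip if $k$ is odd. The lower bound holds because each edge of $Q^n/2$ changes the $Q^{n-1}$-distance by at most $2$. Together these give $d_{Q^n}(\varphi(x),\varphi(y))=2\,d_{Q^n/2}(x,y)$, so $\varphi$ is an embedding with scale $2$.

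Applying Theorem~\ref{scale2} with $H=Q^n$ then shows that $G$ is smooth. The only part requiring real care is the distance formula above, and it is elementary.
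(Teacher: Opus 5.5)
Your proposal is correct and follows the paper's route: a scale-$2$ embedding of $Q^n/2$ into $Q^n$, combined with Theorem~\ref{scale2} and the smoothness of hypercubes. The paper only asserts that this embedding exists; your parity-bit map together with $d_{Q^n/2}(x,y)=\lceil d_{Q^{n-1}}(x,y)/2\rceil$ supplies the missing verification.
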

We note that the isometric subgraphs of half-cubes are the weakly median
graphs without an induced $K_{1,1,1,1,2}=K_6-e$
\cite[Thm.2]{bandelt2000decomposition}. Recall that a {\em cocktail-party graph} (or a {\em hyperoctahedron}) is a complete graph minus a perfect matching. 

\begin{lemma}
  Every cocktail-party graph is smooth.
  \label{lem:cocktail} 
\end{lemma}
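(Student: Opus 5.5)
The plan is a direct verification through the distance formulation of Observation~\ref{obs:dist}, combined with Lemma~\ref{lem:5only}, rather than an embedding argument. Let $G$ be the cocktail-party graph on the vertex set $\{a_1,\bar a_1,\dots,a_n,\bar a_n\}$, where every pair of vertices is adjacent except the $n$ pairs $a_i\bar a_i$. I assume $n\ge 2$ so that $G$ is connected; the case $n=1$ is two isolated vertices and is excluded. Two facts about $G$ drive everything. First, $G$ has diameter at most $2$. Second, every vertex $p$ has exactly one vertex at distance $2$, namely its antipode $\bar p$.

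By Lemma~\ref{lem:5only}, it suffices to check \AX{Sm} for five pairwise distinct vertices $u,v,w,x,y$. Suppose these satisfy the precondition $v\in I[u,w]\cap I[u,y]$, and note that $v\neq u$ and $v\neq w$.

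\begin{itemize}
\item From $v\in I[u,w]$ with $v$ distinct from $u$ and $w$, we get $d(u,w)=d(u,v)+d(v,w)\ge 2$. Since the diameter is at most $2$, this forces $d(u,w)=2$, so $w=\bar u$.
\item Applying the same argument to $v\in I[u,y]$ gives $y=\bar u$.
\item Hence $w=y$, which contradicts the assumption that the five vertices are pairwise distinct.
\end{itemize}

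So the precondition of \AX{Sm} can never hold for five distinct vertices, and the condition of Observation~\ref{obs:interval} is satisfied vacuously. Together with Lemma~\ref{lem:5only}, this shows that $G$ is smooth.

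I do not expect a real obstacle. The only care needed is to confirm that Lemma~\ref{lem:5only} covers every case with a coincidence among the five vertices, and to state the connectivity assumption $n\ge 2$ explicitly.

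As a cross-check, one could instead try a scale-$2$ embedding into a hypercube together with Theorem~\ref{scale2}. The direct argument is shorter and self-contained, so I would use it.
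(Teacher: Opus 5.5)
Your proof is correct and is essentially the paper's argument. After reducing to five pairwise distinct vertices via Lemma~\ref{lem:5only}, the paper likewise notes that the precondition forces $uw$ and $uy$ to be non-edges, which is impossible because each vertex has exactly one non-neighbor; your version phrases this as $d(u,w)=d(u,y)=2$ forcing $w=y=\bar u$, and it helpfully makes the connectivity assumption $n\ge 2$ explicit.
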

\begin{proof}
  Let $G$ be a cocktail-party graph. Thus for every $u\in V(G)$ there is exactly one $u^*\in V(G)$ such that $u^*u\notin V(G)$. By Lemma~\ref{lem:5only} it suffices to
  consider the smoothness condition of Obs.~\ref{obs:dist} for five
  pairwise distinct vertices $u,v,w,x,y\in V(G)$. If the precondition of
  the statement in Obs.~\ref{obs:interval} is satisfied, then none of $uw$,
  $uy$, and $wy$ can be an edge, i.e., for each of $u$, $v$, and $w$ there
  are two non-adjacent vertices in $G$. However, in a cocktail party graph,
  there is exactly one non-adjacent vertex, namely the partner in the
  perfect matching.
\end{proof}

\begin{definition}
  A graph $G$ is a \emph{$\ell_1$-graph} if it has a \emph{scale-$\lambda$
  embedding} in a hypercube, i.e., if there an $n\ge 1$ and map $V(G)\to
  V(Q_n)$ such that $d_{Q_n}(\varphi(x),\varphi(y)) = \lambda d_G(x,y)$ for
  some fixed positive integer constant $\lambda$.
\end{definition}

Shpectorov \cite{Shpectorov:93} proved that a finite graph $G$ is an
$\ell_1$-graph if and only if it is an isometric subgraph of the Cartesian
product of cocktail party graphs (hyperoctahedra), complete graphs, and
half-cubes. This characterization yields the main result of this section:
\begin{theorem}
  Every $\ell_1$-graph is smooth.
  \label{thm:L1-smooth}
\end{theorem}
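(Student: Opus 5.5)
The plan is to combine Shpectorov's characterization with the closure properties established earlier. By \cite{Shpectorov:93}, every finite $\ell_1$-graph $G$ is an isometric subgraph of a Cartesian product $F_1\square F_2\square\cdots\square F_k$, where each factor $F_i$ is a cocktail-party graph, a complete graph, or a half-cube. It therefore suffices to show that (i) each admissible factor is smooth, (ii) the product is smooth, and (iii) smoothness passes to isometric subgraphs.

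For step (i), I would cite the results already in hand. Complete graphs are smooth, as noted after Figure~\ref{fig:non-smooth} (they have diameter one, so the precondition of \AX{Sm} with five distinct vertices can never hold). Cocktail-party graphs are smooth by Lemma~\ref{lem:cocktail}. Half-cubes are smooth by Corollary~\ref{cor:halfcube}, which follows from Theorem~\ref{scale2} together with the smoothness of $Q^n$.

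For step (ii), I would induct on $k$ using Theorem~\ref{thm:smoothproduct}. If $F_1\square\cdots\square F_{k-1}$ is smooth and $F_k$ is smooth, then their Cartesian product is smooth. Each factor is connected, so the theorem applies at every step; the Cartesian product is associative up to isomorphism, so the grouping does not matter. Step (iii) is exactly Observation~\ref{ob:isometric}, which yields that $G$ is smooth.

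The only genuine subtlety is checking that the hypotheses of Shpectorov's theorem match our setting, namely that $G$ is finite and connected. The degenerate factors (a single vertex, $K_1$, or $K_2=Q^2/2$) are trivially smooth, so they cause no trouble. Beyond that, I expect no real obstacle: the substantive work is already contained in Theorem~\ref{scale2}, Theorem~\ref{thm:smoothproduct}, and Lemma~\ref{lem:cocktail}.
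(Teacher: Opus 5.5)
Your proposal is correct and follows the paper's proof essentially step for step. Both use Shpectorov's characterization, the smoothness of the factors via Lemma~\ref{lem:cocktail} and Corollary~\ref{cor:halfcube}, closure under Cartesian products by Theorem~\ref{thm:smoothproduct}, and passage to isometric subgraphs by Observation~\ref{ob:isometric}; your explicit treatment of complete-graph factors and the induction on the number of factors only spell out details the paper leaves implicit.
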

\begin{proof}
  By Cor.~\ref{cor:halfcube}
  and Lemma~\ref{lem:cocktail}, both cocktail party graphs and half-cubes
  are smooth. By Thm.~\ref{thm:smoothproduct} their Cartesian products are
  smooth and Obs.~\ref{ob:isometric}, isometric subgraphs of smooth graphs
  are smooth. Therefore every $\ell_1$-graph is smooth.
\end{proof}

The converse of Theorem~\ref{thm:L1-smooth} is not true, i.e., there are
smooth graphs that are not $\ell_1$. A small example is $W_4^-$;
see~\cite[Lemma 4.24]{chalopin2020weakly}. 

\section{Ptolemaic Graphs}

\emph{Ptolemaic graphs} were introduced by Kay and Chartrand in
\cite{kay1965characterization} as graphs in which the distances obey the
Ptolemy's inequality.  That is, the distances between any four vertices
$u,v,w,x\in V(G)$ satisfy $d(u, v)d(w,x) + d(u, x)d(v, w) \geq d(u, w)d(v,
x)$. Ptolemaic graphs are exactly the distance-hereditary chordal
graphs \cite{howorka1981characterization}, the 3-fan-free chordal graph
\cite{howorka1981characterization}, and the $C_4$-free distance-hereditary
graphs \cite{McKee:10}. Since $K_{1,1,3}$ is Ptolemaic, it is clear
that not all Ptolemaic graphs are smooth. As it turns out, however, there
are no other obstructions:

\begin{theorem}
  \label{smooth-Ptolemaic}
  If $G$ is a Ptolemaic graph, then $G$ is smooth if and only if $G$ is
  $K_{1,1,3}$-free.
\end{theorem}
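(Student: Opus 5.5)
The forward direction is immediate from the observations in the introduction. $K_{1,1,3}$ has diameter $2$, so any induced copy of it in $G$ is an isometric subgraph. Since $K_{1,1,3}$ is not smooth (Fig.~\ref{fig:non-smooth}), Observation~\ref{ob:isometric} shows that a smooth graph cannot contain it. All the work lies in the converse: assume $G$ is Ptolemaic and $K_{1,1,3}$-free, and verify \AX{Sm} through Observation~\ref{obs:interval}. By Lemma~\ref{lem:5only} we may take five pairwise distinct vertices $u,v,w,x,y$ with $uv,wx\in E(G)$, $v\in I[u,w]\cap I[u,y]$ and $x\in I[w,y]$. Suppose, for contradiction, that $v\notin I[u,x]$.

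The tool I would lean on is the standard metric structure of Ptolemaic graphs. Since they are distance-hereditary, every induced path is a shortest path. Since they are chordal, for any two vertices $a,b$ and any $k$, the set $I[a,b]\cap\{z: d(a,z)=k\}$ is a clique. A closely related fact is that for any vertex $a$, the vertices of $N(b)$ that are closer to $a$ than $b$ is form a clique. This is the usual ``geodesic layers are cliques'' property of Ptolemaic graphs, which follows from $C_4$-freeness together with distance-heredity. With this in hand, the failure $v\notin I[u,x]$ means $d(u,x)\le d(v,x)$. I would first pin down the distances. Let $k=d(u,w)$. Then $d(v,w)=k-1$, and $d(u,x)\in\{k-1,k,k+1\}$.
\begin{itemize}
\item If $d(u,x)=k+1$, then $w\in I[u,x]$. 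So $v\in I[u,w]\subseteq I[u,x]$ by \AX{b2}, a contradiction.
\item Otherwise $d(u,x)\in\{k-1,k\}$, and we also need $d(v,x)\ge d(u,x)$.
\end{itemize}

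Next I would use $y$. The vertex $x$ lies on a shortest $w,y$-path, and $v$ lies on shortest paths from $u$ to both $w$ and $y$. My plan is to locate two common neighbours of $v$ and $x$, or a vertex playing that role, that are closer to $u$ than $x$ is. Consider the vertices of the layers of $I[u,y]$ and $I[u,w]$ at distance $d(u,x)$ from $u$, together with the next vertex on a shortest $x,y$-path. Using the clique property of geodesic layers, I expect to find a vertex $z$ adjacent to $x$ with $z\in I[u,x]$. Applying the same clique property in $I[v,w]$ and $I[v,y]$ should show that $z$ is adjacent to $v$, or at least that $v$, $u$, $x$, $z$ and $w$ or $y$ induce a small configuration. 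Running through these configurations, chordality forces chords, and $C_4$-freeness together with the 3-fan exclusion of \cite{howorka1981characterization} eliminates every induced pattern except a $K_{1,1,3}$, where the two adjacent centres play the roles of $v$ and $x$ or $z$. The remaining case after the reduction is essentially the diameter-$2$ situation of Fig.~\ref{fig:non-smooth}(b), so it is excluded by the $K_{1,1,3}$-free hypothesis. An alternative route I would also keep in reserve is to reduce to the blocks: by Proposition~3 of \cite{nebesky2004properties}, $G$ is smooth if and only if its blocks are, and 2-connected Ptolemaic graphs have a layered clique structure that might make the check more direct.

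The main obstacle is this middle step: showing that a violation of \AX{Sm} in a Ptolemaic graph can be localized to five vertices at pairwise distance at most $2$, so that they induce $K_{1,1,3}$. A naive violation may be spread far apart. I would handle this by induction on $d(u,w)+d(w,y)$, using the clique-layer property to replace $w$ by its neighbour on the geodesic toward $y$, or $y$ by a vertex closer to $u$, while preserving the preconditions of \AX{Sm*} via Lemma~\ref{lem:interval2}. The base case is the bounded-diameter configuration, where a direct case check identifies the induced $K_{1,1,3}$.
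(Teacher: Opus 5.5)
Your forward direction is correct: an induced $K_{1,1,3}$ has diameter $2$, so it is isometric, and isometric subgraphs of smooth graphs are smooth.

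The converse, however, is not proved. The distance bookkeeping is right but routine; that includes excluding $d(u,x)=k+1$ via \AX{b2}. Everything after it is phrased as expectation (``I expect to find'', ``should show'', ``running through these configurations''). The step you yourself call the main obstacle is reducing an arbitrary violation of \AX{Sm} to five vertices at pairwise distance at most $2$. That step is the whole content of the theorem, and you give no argument for it.

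The local step also cannot work as stated before such a reduction. A neighbour $z$ of $x$ in $I[u,x]$ exists trivially, with no clique property needed. It satisfies $d(u,z)=d(u,x)-1$, so $zv\in E(G)$ forces $d(u,x)\le 3$. You only know $d(u,x)\in\{k-1,k\}$ with $k=d(u,w)$ arbitrary.

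The induction you sketch has a concrete defect as well. Replacing $w$ by its neighbour on a $w,y$-geodesic is circular. If that neighbour is $x$, the new quintuple needs $v\in I[u,x]$, which is exactly what fails. If it is another neighbour $w'$, you need $v\in I[u,w']$, which is itself an instance of \AX{Sm} of the same size in your measure.

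The reduction that does work keeps $u,v,w,x$ fixed, since they carry the violation, and moves only $y$. This is what the paper does:
\begin{enumerate}
\item Take a counterexample with $d(w,y)$ minimal.
\item Let $y_0$ be the first vertex of the chosen $w,y$-geodesic through $x$ that lies on a chosen $v,y$-geodesic. Then $\{u,v,w,x,y_0\}$ is again a counterexample, so $y_0=y$.
\item Let $v'$ be the branch point of the $v,w$- and $v,y$-geodesics. The three geodesic segments then form a cycle $C$ of length at least $4$.
\end{enumerate}
This bounds no distances, so there is no bounded-diameter base case to induct down to.

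What is missing is the chord analysis showing that $C$ is induced. Each chord between the segments either contradicts the minimality of $d(w,y)$ or creates an induced $3$-fan or a long induced cycle. The one exception is a configuration which, via a shortest $u,x$-path avoiding $v$, collapses to $v=v'$ with $x$ adjacent to $u$; that is an induced $K_{1,1,3}$ on $u,v,w,x,y$. Otherwise $C$ is an induced cycle of length at least $4$, contradicting chordality. Without this analysis, or a genuine substitute for it, your converse is only a plan.
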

\begin{proof}
  Since all smooth graphs are $K_{1,1,3}$-free, this is in particular also
  true for smooth Ptolemaic graphs. Now let $G$ be a $K_{1,1,3}$-free
  Ptolemaic graph. If $|V(G)|\le 4$, then $G$ is trivially smooth.  Assume
  that $G$ is a Ptolemaic graph with $|V(G)|\ge 5$ that is not smooth. Then
  there exists five distinct vertices $u,v,w,x,y$ such that $uv$ and $wx$
  are edges and there are shortest $u,w$- and $u,y$-path through $v$ and a
  shortest $w,y$-path through $x$, but no $u,x$-shortest path through
  $v$. Without loss of generality, may assume that the vertices $u,v,w,x,y$
  form a counterexample in which the distance $d(w,y)$ is the minimum. Let
  $P_{v,w}$, $P_{v,y}$ and $P_{wx,y}$ be shortest $v,w$-, $v,y$- and
  $w,y$-paths, where $P_{wx,y}$ runs through $x$.  Let $v'$ be the last
  common vertex between $P_{v,w}$ and $P_{v,y}$ as we traverse from $v$ and
  let $y_0$ be the first common vertex between $P_{wx,y}$ and $P_{v,y}$. If
  $y_0=v'$, then $x$ lies between $v'$ and $w$ along the shortest
  $u,w$-path $(u,v\dots v',\dots w)$, contradicting our assumption.  Thus
  $y_0\ne v'$.  If $y_0\ne y$, then $\{u,v,w,x,y_0\}$ is counterexample
  with $d(w,y_0)<d(w,y)$, violating the assumption $d(w,y)$ is
  minimal. Hence $y_0=y$.  Let $P_{v',y}$ denote the subpath of $P_{v,y}$
  and $P_{v',w}$ denote the subpath of $P_{v,w}$.
  \smallskip
  \par\noindent\emph{{\bfseries{Claim:}} The cycle $C\coloneqq P_{v',w}
  \cup P_{v',y} \cup P_{wx,y}$ is an induced cycle in $G$.}
  \par\noindent {\em Proof of Claim.} Suppose that $C$ is not induced. Then there are chords in
  $C$.  
  
  Let $y''y_1$ be the chord from $P_{v',y}$ to $P_{wx,y}$ with $y''$ being
  the vertex closest to $v'$ and under this assumption $y_1$ is the vertex
  closest to $x$. Then the path formed by the subpath of the shortest path
  $P_{v,y}$ through $v'$ and $y''$, denoted as $P_{uv,y''}$ and the edge
  $y''y_1$, denoted as $P_{u,v',y_1}$ is an induced $u,y_1$-path through
  $v$. If $P_{u,v',y_1}$ is a shortest $u,y_1$-path, then this is a
  contradiction with minimality assumption, since $u,v,w,x$ and $y_1$
  provide a counter-example and $d(w,y_1)<d(w,y)$.  Thus we may assume that
  $P_{u,v',y_1}$ is not a shortest path and that $d(u,y_1)<d(v,y_1)+1$. Let
  $P_{u,y_1}$ be a shortest $u,y_1$-path.
  Now, $P_{u,y_1}$ must be vertex disjoint with $P_{u,v',y_1}$, since
  $d(u,y_1)<d(v,y_1)+1$.  Therefore the cycle $C_1$ formed by the union of
  the paths $P_{u,y_1}$ and $P_{u,v',y_1}$ is a cycle of length at least
  four. If $u$ and $y_1$ are adjacent, then the cycle formed by
  $P_{u,v',y_1}$ and the edge $uy_1$ is an induced cycle of length at least
  $4$, a contradiction with $G$ being chordal. Thus, $u$ and $y_1$ are not
  adjacent, and the length of $C_1$ is at least five. To avoid $C_1$ being
  an induced cycle, there are chords from $P_{u,y_1}$ to
  $P_{u,v',y_1}$. Consider the chord $u_1v_1$ with $u_1$ being the vertex
  closest to $u$, and under this assumption $v_1$ is the closest to $y''$
  in $P_{uv,y''}$. Then the subpath $P_{u,u_1}$ of $P_{u,y_1}$, $u_1v_1$
  and the subpath $P_{u,v_1}$ of $P_{uv,y''}$ form a cycle $C_2$ of length
  at least $4$ (note that $v=v'$ is possible). To avoid $C_2$ being
  induced, there should be chords $u_1x$ for all vertices $x$ in the
  subpath $P_{v,v_1}$ of the path $P_{u,v',y_1}$. In addition, $uu_1$ is an
  edge. Thus, if $C_2$ has at least $5$ vertices, we derive that there
  exists an induced $3$-fan with $u_1$ as its center, which is a
  contradiction with $G$ being Ptolemaic. Now, suppose $C_2$ has $4$
  vertices, Then, $v'=v$ and $v_1=y''$. Since $d(u,y_1)<d(v,y_1)+1 =3$, we
  infer that $u_1y_1$ is an edge. But then $u,v,y'',y_1$ and $u_1$ form a
  $3$-fan with $u_1$ as its center, again a contradiction. Thus, there are
  no chords between $P_{v',y}$ and $P_{wx,y}$.


  The case of chords from $P_{v',w}$ to $P_{wx,y}$ can be resolved in a
  similar way as in the previous paragraph, yielding that such chords are
  not possible.

  Finally, suppose that there are chords from $P_{v',w}$ to
  $P_{v',y}$. Among these chords, consider $w'y'$ such that $w'$ is the
  vertex on $P_{v',w}$ closest to $w$ with a chord and $y'$ being the
  vertex in $P_{v',y}$ closest to $y$ so that the path $P_{w,w'y'}$, formed
  by the union of the subpath $P_{w,w'}$ of $P_{v',w}$ and chord $w'y'$, is
  induced. The union of the paths $P_{w,w'y'}$, the subpath $P_{y',y}$ of
  $P_{v',y}$ and the path $P_{w,y}$ form a cycle $C'$ of length at least
  $5$. Since $C'$ cannot be an induced cycle in the Ptolemaic graph $G$,
  there must be chords from the $P_{w,w'}$ to $P_{w,y}$.  By the choice of
  $w'$, such a chord must connect $w'$ to a vertex in $P_{w,y}$. The last
  chord of this type that prevents an induced cycle of length greater than
  or equal to 4 connects $w'$ to $y$. This requires that $w'$ is adjacent
  to $w$ and $y$ is adjacent to $x$. Thus there is a cycle $C'' = ww' \cup
  w'y \cup P_{w,y}$. Since $C''$ must not be a long cycle, $w'x$ must be
  chord, but then the subgraph formed by the vertices $w,w',y',y$ and $x$
  should form an induced 3-fan, a contradiction. This scenario implies that
  $w'= v'$.  Now, the cycle formed by the union of the chord $v'x$, $xy$
  and $P_{v',y}$ will be an induced cycle of length greater than or equal
  to 4, unless $y'=y$.  The length of the path, say $P$, formed by the
  shortest path $P_{uv,v'}$ and $v'x$ is $k=d(u,w)\ge 2$.  Since the
  vertices $u,v,w,x,y$ form a counterexample to \AX{Sm}, we infer that
  $d(u,x)=k-1$. Therefore, there exists a $u,x$-shortest path, say
  $P_{u,x}$ of length $k-1$, not passing through $v$ and $v'$.  Let $u'$ be
  the neighbor of $u$ in $P_{u,x}$. The union of the paths formed by $P$
  and $P_{u,x}$ form a cycle, say $C_1$ of length greater than or equal to
  4. To prevent $C_1$ being an induced cycle, there must be chords from
  $u'$ to $P_{uv,v'}$. Since $P_{uv,v'}$ is a shortest path, the chords can
  only connect $u'$ to $v$ and to the vertex $v_1$ adjacent to $v$ on
  $P_{uv,v'}$. Now, to prevent the cycle formed by the union of $u'v_1$,
  the subpath, say $P_{v_1,v'x}$ of $P$ and the subpath $P_{u',x}$ of
  $P_{u,x}$, being an induced long cycle or a 3-fan, we infer that
  $v=v_1=v'$, $u'=x$ and thus $u,v,w,x,y$ form an induced $K_{1,1,3}$, a
  final contradiction.  
  \smallqed
  \smallskip
  \par\noindent The claim shows that $G$ has an induced cycle $C$ with at
  least $4$ vertices, which is contradiction with $G$ being chordal.
\end{proof}

\section{Smoothness among Weakly Modular Graphs}

Weakly modular graphs have received considerable attention in metric graph
theory, see e.g.\ \cite{bc-2008,chalopin2020weakly}, and it is therefore of
interest to consider smoothness in this much larger class of graphs.
\begin{definition}
\label{d:weak-mod}
A graph $G$ is \emph{weakly modular with respect to a vertex $u$} if its
distance function $d$ satisfies
\begin{itemize}
\item[] \emph{Triangle property:} For any two vertices $v,w$ with
  $1=d(v,w)<d(u,v)=d(u,w)$ there exists a common neighbor $z$ of $v$ and
  $w$ such that $d(u,x)=d(u,v)-1$.
\item[] \emph{Quadrangle property:} For any three vertices $v,w,z$ with
$d(v,z)=d(w,z)=1$ and  $2=d(v,w)\le d(u,v)=d(u,w)=d(u,z)-1,$ there
exists a common neighbor $x$ of $v$ and $w$ such that
$d(u,x)=d(u,v)-1$.
\end{itemize}
A graph $G$ is \emph{weakly modular} if it is weakly modular with respect
to any vertex $u$.
\end{definition}

A prominent subclass of weakly modular graphs are weakly median graps which
are exactly the weakly modular graphs that do not contain $K_{1,1,3}$,
$K_{2,3}$, the ``x-house'' $K_{1,1,3}^+$, and the wheel with a missing
spoke $W_4^-$ \cite{bandelt2000decomposition}. It was also shown in
\cite{bandelt2000decomposition} that $\ell_1$-graphs encompass all weakly
median graphs. Lemma~9 in \cite{Chepoi:94} establishes that
$U(v,u)=\{x\in V:\, v\in I[x,u]\}$ is convex for any two vertices $u$
and $v$ of a weakly median graph. Lemma~\ref{lem:U(a,b)-convex} thus yields
an alternative argument for the smoothness of weakly median graphs.

As shown in~\cite{Chung:89}, quasi-median graphs (which we have already
shown to be smooth since they are partial Hamming graphs) can be
characterized as the weakly modular graphs that are
$(K_{2,3},K_{4}-e)$-free. Since $K_{4}-e$ is an induced subgraph of
$K_{1,1,3}$, $W_4^-$, and $K_{1,1,3}^+$, they are a proper subclass of
weakly median graphs, which are smooth by virtue of being $\ell_1$-graphs.

Weakly modular $\ell_1$-graphs do not contain a induced $K_{2,3}$ or
$W_4^-$ \cite{chalopin2020weakly}, and $(K_{2,3},W_4^-)$-free weakly
modular graphs are known as premedian graphs \cite{Chastand:01}.
Lemma~4.24 in \cite{chalopin2020weakly}, moreover, asserts that all
$\ell_1$ weakly modular graphs are premedian graphs without propellers,
where a \emph{propeller}, or $K_5-K_3$, consists of three triangles glued
along a common edge, and hence are isomorphic to $K_{1,1,3}$. Since
$\ell_1$-graphs are smooth, it natural to ask whether all
$(K_{2,3},K_{1,1,3},W_4^-)$-free weakly modular graphs, i.e., the
$K_{1,1,3}$-free premedian graphs, are smooth. The graph in
Fig.~\ref{fig:g337} shows, however, that this is not the case.

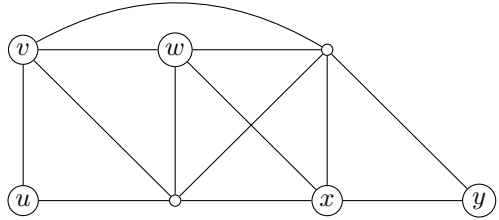
\begin{figure}[h]
  \centering
  \begin{tikzpicture}[every node/.style=
      {circle, draw, fill=white, inner sep=1.5pt}]
    \node (u) at (-6,0) {$u$};
    \node (v) at (-6,2) {$v$};
    \node (w) at (-4,2) {$w$};
    \node (a) at (-2,2) {};
    \node (b) at (-4,0) {};
    \node (x) at (-2,0) {$x$};
    \node (y) at (0,0) {$y$};
    \draw (u) -- (v) --(w) -- (a) -- (y) -- (x) --(b) -- (u);
    \draw (v) -- (b) -- (a) -- (x) --(w) -- (b);
    \draw (v) to[bend left=30] (a);
  \end{tikzpicture}
  \caption{A graph without induced $K_{2,3}$, $K_{1,1,3}$, and $W_4^-$.
    i.e., a $K_{1,1,3}$-free premedian graph, that is not smooth. This
    graph is also chordal, and thus also bridged, weakly bridged, and
    bucolic.  It is not a weakly median graph since it contains the x-house
    $K_{1,1,3}^+$, i.e., a $K_4$ with an attached triangle, as an induced
    subgraph. Five vertices violating \AX{Sm} as labeled.}
  \label{fig:g337}
\end{figure}

By Thm.~\ref{smooth-Ptolemaic}, the $K_{1,1,3}$-free Ptolemaic graphs are
smooth. These also form a subclass of weakly modular graph. More precisely,
they are a subclass of chordal graphs, and thus in particular bridged. A
graph is called \emph{bridged} if it does not contain isometric cycles
larger than triangles. Equivalently, they are the $(C_4,C_5)$-free weakly
modular graphs \cite{Chepoi:89}. The \emph{weakly bridged graphs} coincide
with the $C_4$-free weakly modular graphs \cite{Chepoi:15}.  The finite
bucolic graphs, introduced in \cite{bucolic}, can be characterized as the
$(K_{2,3},W_4,W_4^-)$-free weakly modular graphs.  Alternatively, finite
bucolic graphs can be obtained by gated amalgamation from Cartesian
products of 2-connected weakly bridged graphs \cite{bucolic}. Since both
$K_{2,3}$ and $W_4^-$ contain an induced $C_4$, bridged graphs and
therefore in particular Ptolemaic graphs are premedian graphs.  The x-house
$K_{1,1,3}^+$ is smooth and Ptolemaic, but not weakly median.  On the other
hand, $K_{1,1,3}$ is Ptolemaic. It is therefore of interest to consider the
$K_{1,1,3}$-free Ptolemaic graphs as a source of smooth graphs that are not
weakly median and also not $\ell_1$.

On the other hand, we may ask if there are larger well-studied subclasses
of $K_{1,1,3}$-free premedian graphs that are smooth.  Since $K_{1,1,3}$ is
bridged, the best we can hope for is that $K_{1,1,3}$-free bucolic,
weakly-bridged, or bridged graphs are smooth.  The graph in
Fig.~\ref{fig:g337}, however, is not smooth. It does not contain an induced
$K_{2,3}$, $W_4$, $W_4^-$, $C_4$, or $C_5$. Since it is premedian, it is in
particular a weakly modular graph, and hence also bucolic, weakly bridged,
and bridged. Each of these graph classes, therefore, contain non-smooth
graphs without an induced $K_{1,1,3}$.

\begin{figure}[htb]
  \begin{center}
    \centering
    \begin{tikzpicture}[every node/.style=
        {circle, draw, fill=white, inner sep=1.5pt}]
      \node (0) at (2, 0) {$w$};        
      \node (1) at (0,3.5) {$u$};
      \node (2) at (-2,0) {$y$};
      \node (3) at (0,2) {$v$};
      \node (4) at (0,0) {$x$};
      \node (5) at (-2,2) {};
      \node (6) at (2, 2) {};
      \draw (0) -- (4) -- (2) -- (5) -- (3) -- (6) -- (0);
      \draw (0) -- (3) -- (2);
      \draw (6) -- (4) -- (5);
      \draw (6) -- (1) -- (5);
      \draw [line width=.4mm] (1) -- (3);
      \draw [line width=.4mm] (1) -- (5);
      \draw [line width=.4mm] (1) -- (6);
      \draw [line width=.4mm] (3) -- (5);
      \draw [line width=.4mm] (3) -- (6);
      \draw [line width=.4mm] (4) -- (5);
      \draw [line width=.4mm] (4) -- (6);
    \end{tikzpicture}
  \end{center}
  \caption{Example of $(K_{1,1,3}, K_{2,3}, K_{1,1,3}^+)$-free weakly
    modular graph that is not smooth. Note that this graph contains an
    induced $W_4^-$ (bold edges). The vertices obstructing \AX{Sm} are
    labelled.}
  \label{fig:onlyW4-}
\end{figure}
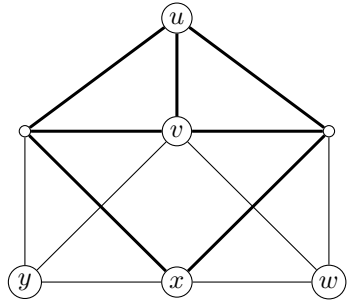

According to \cite{bandelt2000decomposition}, the
$(K_{1,1,3},K_{1,1,3}^+)$-free bridged graphs are exactly the weakly median
bridged graphs. Hence this subclass of bridged graphs is smooth.  In
particular, the counter example in Fig.~\ref{fig:g337} contains an induced
``x-house'' $K_{1,1,3}^+$. This graph is also the only non-smooth
$(K_{1,1,3}, K_{2,3}, W_4^-)$-free graph with at most $8$ vertices.  A
computational survey of bridged graphs without $K_{1,1,3}$ and up to 8
vertices showed that they all contain a $K_{1,1,3}^+$. Hence we reasonably
may ask whether the $(K_{1,1,3},K_{1,1,3}^+,K_{2,3})$-free weakly modular
graphs are smooth. A computational survey, however, identified several
examples of such graphs that are not smooth, one of which is shown in
Fig.~\ref{fig:onlyW4-}.  Interestingly, they contain an induced
$W_4^-$. Taken together, these negative results suggest that the smooth
graphs do not extend to any of the ``obvious'' generalizations of weakly
median graphs among the weakly modular graphs.

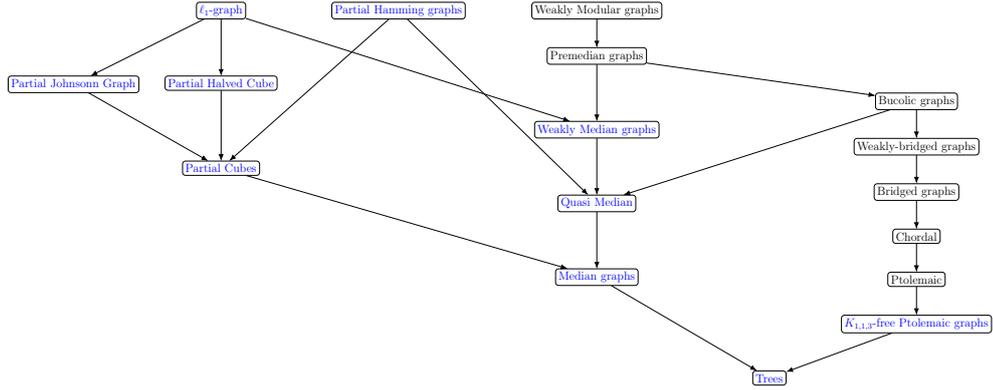
\begin{figure}[h]
\centering
\resizebox{0.99\textwidth}{!}{%
  \begin{tikzpicture}[
      every node/.style={draw, rounded corners=3pt, fill=white!10, inner
        sep=3pt, font=\large}, edge/.style={-Latex, thick} ]    
    \node (weakly-mod) {Weakly Modular graphs};
    \node(l1-graph) [left =10cm of weakly-mod]
         {\textcolor{blue}{$\ell_1$-graph}};
    \node (partial-hamming) [right =3cm of l1-graph]{\textcolor{blue}{Partial Hamming graphs}};     
\node (pre-median) [below =1cm of weakly-mod]{Premedian graphs};
\node (weakly-med) [below =2cm  of pre-median]{\textcolor{blue}{Weakly Median graphs}};
\node (partial-cube) [below=5cm of l1-graph]{\textcolor{blue}{Partial Cubes}};
\node (quasi-median)[below =2cm of weakly-med]{\textcolor{blue}{Quasi Median}};
\node (med) [below=2cm of quasi-median]{\textcolor{blue}{Median graphs}};
\node (Bucolic) [below right=1cm and 8cm of pre-median]{Bucolic graphs};
\node (weakly-bridged) [below =1cm of Bucolic]{Weakly-bridged graphs};
\node (bridged) [below =1cm of weakly-bridged]{Bridged graphs};
\node (chordal) [below =1cm of bridged]{Chordal};
\node (ptolemaic) [below =1cm of chordal]{Ptolemaic};
\node (ptolemaic-K113)[below =1cm of
  ptolemaic]{\textcolor{blue}{$K_{1,1,3}$-free Ptolemaic graphs}};

  \node(partial-johnsonn)[below left =2cm and 2cm of l1-graph]{\textcolor{blue}{Partial Johnsonn Graph}};
  \node(partial-halved)[below =2cm  of l1-graph]{\textcolor{blue}{Partial Halved Cube}};
  \node (tree) [below right =3cm and 4cm of med]{\textcolor{blue}{Trees}};

\draw[edge] (weakly-mod) -- (pre-median);
\draw[edge] (pre-median) -- (weakly-med);
\draw[edge] (pre-median) -- (Bucolic);
\draw[edge] (Bucolic) -- (weakly-bridged);
\draw[edge] (weakly-bridged) -- (bridged);
\draw[edge] (bridged) -- (chordal);
\draw[edge] (chordal) -- (ptolemaic);
\draw[edge] (weakly-med) -- (quasi-median);
\draw[edge] (quasi-median) -- (med);
\draw[edge] (partial-hamming) -- (quasi-median);
\draw[edge] (partial-hamming) -- (partial-cube);
\draw[edge]  (partial-cube) -- (med);
\draw[edge] (Bucolic) -- (quasi-median);
\draw[edge] (ptolemaic) -- (ptolemaic-K113);
\draw[edge] (l1-graph) -- (partial-johnsonn);
\draw[edge] (l1-graph) -- (partial-halved);
\draw[edge] (partial-johnsonn) -- (partial-cube);
\draw[edge] (partial-halved) -- (partial-cube);
\draw[edge] (med) -- (tree);
\draw[edge] (ptolemaic-K113) -- (tree); 
\draw[edge] (l1-graph) -- (weakly-med);
\end{tikzpicture}}
\caption{Hierarchy of graph classes related to Smooth Axioms, where the
  blue colored families are smooth graphs.}
\end{figure}

On the other hand, $K_{1,1,3}^+$ is a pre-median bridged graph, but not
weakly median, and nevertheless smooth. Similarly, the graph $W_4^-$ is smooth.
Therefore, there are weakly modular graphs that are smooth and not even
pre-median, and in particular not $\ell_1$-graphs.

A graph class of interest in this context are the \emph{pseudo-modular}
graphs \cite{Bandelt:86}, i.e, graphs in which all metric triangles have
size $0$ or $1$. They are weakly modular graphs characterized by the metric
condition: if $1\le d(u,w)\le 2$ and $d(v,u)=d(v,w)=k\ge 2$ then there is a
vertex $x$ with $d(u,x)=d(w,x)=1$ and $d(v,x)=k-1$. The counterexample in
Fig.~\ref{fig:onlyW4-} is pseudo-modular, suggesting that there is no
straightforward relationship between smoothness and pseudomodularity.

\section{Open Questions}

A comparison of graph classes that we have shown to be smooth, and the
  subclasses of weakly modular graphs immediately suggests to consider the
  following challenge:
\begin{problem}
  Characterize the smooth weakly modular graphs as a subclass of the
  $(K_{2,3},K_{1,1,3})$-free weakly modular graphs.
\end{problem}

In metric graph theory concerned with median-like classes of graphs, the
fact that a class of graphs is closed under the Cartesian product and the
gated amalgamation operations, opens the question of describing the prime
building blocks from which this class of graphs can be generated;
cf.~\cite{bc-2008,bresar2003}. Here we have proved that the strong product
of smooth graphs are also smooth. Based on
Theorems~\ref{thm:smoothproduct}, \ref{gated-amalgam-smooth} and
\ref{thm:strong}, it seems natural to modify the notion of prime graphs such
that a graph is \emph{``strongly prime''} if it is neither a gated amalgam of
any proper subgraphs nor a nontrivial Cartesian or strong product. 
\begin{problem}
Characterize the ``strongly prime'' smooth graphs.
\end{problem}
In \cite{bandelt2000decomposition}, it is established that the set
$\mathbf{P}_{WM}$ of prime weakly median graphs comprises precisely (i) the
5-wheel $W_5$ (a 5-cycle plus a pivot vertex adjacent to all vertices of
the cycle), (ii) the subhyperoctahedra (induced subgraphs of
hyperoctahedra, i.e., cocktail-party graphs), that is, multipartite graphs
of the form $K_{i_1,i_2,\ldots}$ with $1 \leq i_j \leq 2$ different from
the singleton graph $K_1$, the 3-vertex path $P_2 = K_{1,2}$, and the
4-cycle $C_4 = K_{2,2}$, and (iii) the 2-connected $(K_4,K_{1,1,3})$-free
bridged graphs. Since weakly median graphs are smooth and preserve the
operation of Cartesian products and gated amalgamations, it follows that
these graphs are also prime graphs for the smooth graphs. Now consider the
family $\mathcal{F}$ of all graphs obtained by performing the operations of
Cartesian products, strong products and gated amalgamations starting from
the set of prime complete graphs $\{K_{p}:\, p \text{ is a prime
  integer}\}$ and set of prime graphs $\mathbf{P}_{WM}$ for weakly median
graphs.  By Theorems~\ref{thm:smoothproduct}, \ref{thm:strong} and
\ref{gated-amalgam-smooth}, all graphs in $\mathcal{F}$ are smooth. This
leads to the following question:
\begin{problem}
  Is $\mathcal{F}$ the family of all smooth weakly modular graphs?
\end{problem}
It will also be of interest to explore whether smooth graphs are preserved by isometric expansion \cite{Chepoi:88}.

The fact that a large class of graphs is smooth suggests that \AX{Sm} may
also be of interest in a broader context, i.e., beyond the realm of
graphs. The equivalence of \AX{Sm} and \AX{Sm*}, Lemma~\ref{lem:interval2},
suggests to consider smooth transit functions\footnote{$R: X\times X\to
2^X$ such that $x,y\in R(x,y)$, $R(x,y)=R(y,x)$, and $R(x,x)=\{x\}$ for all
$x,y\in X$} \cite{Mulder:80} and in particular smooth geometric transit
functions \cite{Nebesky:01}, which in addition to \AX{Sm*} also satisfy the
two of the betweenness axioms \AX{b2} and \AX{b3}. Moreover, the smoothness
property expressed in the form of distances in Obs.~\ref{obs:dist} appears
to be of interest also as property of (finite) metric spaces e.g.\ in the
context of phylogenetic networks.

\begin{small}
\section*{Declarations}

\subsection*{Availability of Data and Materials}
There are no data associated with this work.

\subsection*{Competing interests}
The authors declare that they have no competing interests, or other
interests that might be perceived to influence the results and/or
discussion reported in this paper.

\subsection*{Dedication}
This contribution is dedicated to the memory of Andreas W.\ M.\ Dress.

\subsection*{Acknowledgment}
We thank Victor Chepoi for pointing out the connection between
  smoothness and convexity of the point-shadow sets.

\subsection*{Funding}
This work was supported in part by the DST, Govt. of India (Grant
No. DST/INT/DAAD/P-03/ 2023), the DAAD, Germany (Grant No. 57683501), PFS
acknowledges the financial support by the Federal Ministry of Research,
Technology and Space of Germany (BMFTR) through DAAD project 57616814
(SECAI, School of Embedded Composite AI), and jointly with the
S{\"a}chsische Staatsministerium f{\"u}r Wissenschaft, Kultur und Tourismus
in the programme Center of Excellence for AI-research \emph{Center for
Scalable Data Analytics and Artificial Intelligence Dresden/Leipzig},
project identification number: SCADS24B. B.B. was supported by the
Slovenian Research and Innovation agency (grants P1-0297, J1-70045, N1-0285, and
N1-0431).

\subsection*{Authors' contributions}
MC and PFS designed the study. All authors contributed to the
mathematical results and the drafting of the manuscript. 
\end{small}

\bibliography{smoothbib}
\end{document}